\newtheorem{theorem}{Theorem}[section]
\newtheorem*{theorem*}{Theorem}
\newtheorem{corollary}[theorem]{Corollary}
\newtheorem{conjecture}[theorem]{Conjecture}
\newtheorem{lemma}[theorem]{Lemma}
\newtheorem{proposition}[theorem]{Proposition}
{\theoremstyle{definition}
\newtheorem{definition}[theorem]{Definition}

\newtheorem*{remark*}{Remark}}
\newcommand{\eps}{\varepsilon}
\newcommand{\CC}{\mathbb{C}}
\newcommand{\RR}{\mathbb{R}}
\newcommand{\QQ}{\mathbb{Q}}
\newcommand{\ZZ}{\mathbb{Z}}
\newcommand{\PP}{\mathbb{P}}
\newcommand{\op}[1]{\operatorname{#1}}
\newcommand{\GL}{\op{GL}}
\newcommand{\PGL}{\op{PGL}}
\newcommand{\SL}{\op{SL}}
\newcommand{\SO}{\op{SO}}
\newcommand{\SU}{\op{SU}}
\newcommand{\U}{\op{U}}
\newcommand{\tr}{\op{tr}}
\renewcommand{\Im}{\op{Im}}
\renewcommand{\Re}{\op{Re}}
\newcommand{\ovl}[1]{\overline{#1}}
\def\@email#1#2{%
 \endgroup
 \patchcmd{\titleblock@produce}
  {\frontmatter@RRAPformat}
  {\frontmatter@RRAPformat{\produce@RRAP{*#1\href{mailto:#2}{#2}}}\frontmatter@RRAPformat}
  {}{}
}%
\begin{document}

\preprint{}

\title[On Unitary Monodromy of Second-Order Ordinary Differential Equations]{On Unitary Monodromy of Second-Order Ordinary Differential Equations}

\author{D. Darrow}
\affiliation{ 
Department of Mathematics, MIT, Cambridge, MA 02139, USA}%
\email{ddarrow@mit.edu.}

\author{E. Chen}%
\affiliation{ 
Stanford University, Stanford, CA 94305, USA}%

\author{A. Zitzewitz}
\affiliation{%
MIT, Cambridge, MA 02139, USA}%

\date{\today}

\begin{abstract}
Given a second-order, holomorphic, linear differential equation $Lf=0$ on a punctured Riemann surface, we say that its monodromy group $G\subset\GL(2,\CC)$ is \emph{unitary} if it preserves a non-degenerate Hermitian form $H$ on $\CC^2$ under the action $g\circ H=g^\dagger H g$. In the present work, we give two sets of necessary and sufficient conditions for a monodromy group $G\subset\GL(2,\CC)$ to be unitary. First, in the case that the natural representation of $G$ on $\CC^2$ is irreducible, we show that unitarity is equivalent to a set of easily-verified trace conditions on local monodromy matrices; in the case that the representation is reducible, we show that $G$ is unitary if and only if it is contained in one of two model subgroups of $\GL(2,\CC)$. Second, we show that unitarity is equivalent to a criterion on the real dimension of the algebra $A$ generated by a rescaled group $G'\subset\SL(2,\CC)$: that $\dim(A)=1$ if $G\subset S^1$ is scalar, $\dim(A)=2$ if $G$ is abelian, $\dim(A)=3$ if $G$ is non-abelian but its action on $\CC^2$ is reducible, and $\dim(A)=4$ otherwise. Our results directly extend the recent work of Adachi (2022, 2024), which treated Fuchsian equations with irreducible monodromy representation on the punctured Riemann sphere. We leverage these results to provide evidence that the spectrum of any Darboux operator should belong to a perturbed, squared lattice in the plane, extending a conjecture of Frits Beukers (2007). Our work makes progress towards characterizing the spectra of second-order operators on Riemann surfaces, and in particular, towards answering the \emph{accessory parameter problem} for Darboux equations.
\end{abstract}

\maketitle

\section{Introduction} \label{sec:Intro}

Consider the equation $Lf = 0$, where $L$ is a second-order, holomorphic differential operator on a (punctured) Riemann surface. 
We say the monodromy group $G\subset\GL(2,\CC)$ of this equation is \emph{unitary} if it preserves a non-degenerate Hermitian form $H$ under the action \(g:H \mapsto g^{\dagger} H g\). We consider the following problem:
\begin{equation}
	\tag{A}\label{qs:app}
	\parbox{\dimexpr\linewidth-4em}{%
		\strut\itshape
		For which operators $L$ does the equation $Lf = 0$ have a unitary monodromy group?%
		\strut
	}
\end{equation}

In the present work, we give two sets of necessary and sufficient conditions for a general, second-order operator $L$ to have a unitary monodromy group, we construct the Hermitian form $H$ in both cases, and we apply this result to better understand the point-spectra of differential operators. As a key example, we consider the \emph{Heun} equation~\cite{heun1888theorie}, which encompasses any second-order equation on the Riemann sphere $\PP^1$ with four regular singularities. The Heun equation reads
\begin{equation} \label{eq:Heun}
\frac{d^2y}{dx^2} + \left(\frac{\gamma}{x} + \frac{\delta}{x-1} + \frac{\varepsilon}{x-a}\right) \frac{dy}{dx} + \frac{\alpha \beta x - \frac{B}{4}}{x(x-1)(x-a)} y = 0,
\end{equation}
where $B\in\CC$, and where the parameters $\gamma,\delta,\varepsilon,\alpha,\beta\in \RR$ satisfy $\gamma + \delta + \varepsilon = 1 + \alpha + \beta$. In this context,~\eqref{qs:app} can be reformulated as the \emph{accessory parameter problem}:
\begin{equation}
	\tag{A'}\label{qs:app2}
	\parbox{\dimexpr\linewidth-4em}{%
		\strut\itshape
		For which values $B\in\CC$ does~\eqref{eq:Heun} have a unitary monodromy group?%
		\strut
	}
\end{equation}
Even restricted to the Heun equation, this problem has several important applications. For one, the analytic Langlands correspondence claims that eigenfunctions of \emph{Hecke} (or \emph{Baxter}) and \emph{Gaudin} operators on the moduli space of parabolic $\PGL(2,\CC)$ bundles on $\PP^1$ are encoded by rank-two Fuchsian equations with real-valued (and thus unitary) monodromy~\cite[Sec.~3.3]{Etingof:2023drx}. The simplest nontrivial example of the analytic Langlands correspondence occurs when $N=4$, which leads to the accessory parameter problem for the Heun equation~\cite{alma9949400487002959,Etingof:2023drx}. In a different direction, the accessory parameter problem for the Heun equation can be shown to be equivalent to the problem of classifying spherical quadrilaterals~\cite{Eremenko2016rect}, where each accessory parameter satisfying \eqref{qs:app2} corresponds to a unique spherical quadrilateral with ($\pi$-scaled) angles $(1-\gamma,1-\delta,1-\eps,\alpha-\beta)$. Eremenko and Gabrielov used this correspondence to classify spherical rectangles, for which each angle has a half-integer value~\cite{Eremenko2016rect}.

Monodromy and the accessory parameter have also garnered wide interest in different contexts. Historically, monodromy of the Heun equation was studied in the context of isomonodromic deformations, tracing back to Paul Painlev\'{e} in the early 20$^\text{th}$ century. Specifically, the modern isomonodromic framework shows that Painlev\'{e}'s sixth equation characterizes the isomonodromic deformations of rank-two Fuchsian systems with four regular singular points~\cite{Takemura2005HP}; the scalar Heun equation~\eqref{eq:Heun} can be written in this form by lifting it to a first-order equation for $Y = (y,\frac{dy}{dx})$. The accessory parameter has also seen significant attention---for instance, Nehari~\cite{nehari1949accessory} examined the impact of the accessory parameter on the Schwarz map generated by the ratio of two independent solutions to a Fuchsian equation. Additionally, Keen, Rauch, and Vasquez~\cite{keen1979moduli} have studied the accessory parameter as a parameter relating different covering maps of the punctured torus, and Beukers~\cite{beukers2002dwork} has studied the accessory parameter's impact on the $p$-adic radius of convergence for solutions to these differential equations.

A partial answer to~\eqref{qs:app2} follows from classical work of Fricke and Klein~\cite[p.~365--366]{Fricke1897}. Namely, if $\gamma,\delta,\eps,\alpha,\beta\in\QQ$, then it is well-known that the monodromy group is determined by the so-called \emph{Fricke parameters}; if $P,Q,R\in \SL(2,\CC)$ are the generators of the monodromy group, the Fricke parameters are as follows:
\begin{equation}\label{eq:fricke}
    \tr(P), \tr(Q), \tr(R), \tr(PQ), \tr(QR), \tr(PR), \tr(PQR).
\end{equation}
If $\gamma,\delta,\eps,\alpha,\beta\in\QQ$, the seven values \eqref{eq:fricke} are known to be real, and the monodromy representation is irreducible, then one can deduce that the monodromy group itself is unitary~\cite[Sec.~4]{reiter2011halphenstransformmiddleconvolution}. It was also shown by Morgan and Shalen~\cite[Prop.~III.1.1]{Morgan1984} and by Goldman~\cite[Thm.~4.3]{Goldman1988} that a general subgroup $G\subset \SL(2,\CC)$ is unitary if and only if $\tr (g)\in\RR$ for all $g\in G$, or equivalently, if the natural representation of $G$ on $\CC^2$ has real character. Beukers offered a stronger result in the special case of the \emph{Lamé} equation~\cite{beukers2007unitary}, i.e., the Heun equation with $\gamma=\delta=\eps=1/2$. Beukers showed that, if $\gamma=\delta=\eps=1/2$ and $\alpha=\beta=1/4$, a unitary monodromy group is equivalent to a condition on only five of the seven Fricke parameters~\eqref{eq:fricke}, and he leveraged this result to estimate the eigenvalues of the Lam\'e operator. Finally, Adachi~\cite{adachi_monodromy_2022,adachi2023unitarymonodromiesrankfuchsian} has recently proved necessary and sufficient trace conditions for a generic rank-two Fuchsian equation on the punctured Riemann sphere to carry a unitary monodromy group, by studying the moduli space of irreducible monodromy groups as an algebraic variety. Adachi's work provides important geometric insight, but is also highly technical, and does not cover the full range of second-order differential equations. Specifically, it applies only to equations with regular singularities on the punctured Riemann sphere, and only in the generic case that the monodromy representation is irreducible. 


In the present work, we work towards answering \eqref{qs:app} in the case of general second-order differential operators, by characterizing the unitary subgroups $G$ of $\GL(2,\CC)$. We provide two sets of equivalent criteria for unitarity. First, in the case that the natural representation of $G$ on $\CC^2$ is \emph{irreducible}, we relate unitarity to a set of easily-verified trace conditions on monodromy matrices, directly extending previous results in this direction~\cite{Morgan1984,Goldman1988,beukers2007unitary,adachi_monodromy_2022,adachi2023unitarymonodromiesrankfuchsian}. For Fuchsian operators on the Riemann sphere, such as the Heun equation \eqref{eq:Heun}, our proof yields an elementary alternative to the recent work of Adachi~\cite{adachi_monodromy_2022,adachi2023unitarymonodromiesrankfuchsian}. Indeed, our result relies only on linear algebra at the level of monodromy generators, and is thus highly accessible to non-specialists and well-adapted to numerical experimentation (cf.~Section~\ref{sec:asymp}). In the case where the representation is reducible, we show that $G$ is unitary if and only if it is conjugate to a subset of one of two model subgroups of $\GL(2,\CC)$. Second, we show that both reducible and irreducible cases can be understood in terms of a new algebraic criterion. Namely, unitarity is equivalent to a criterion on the real dimension of the algebra $A$ generated by a rescaled group $G'\subset\SL(2,\CC)$: that $\dim(A)=1$ if $G\subset S^1$ is trivial, $\dim(A)=2$ if $G$ is abelian, $\dim(A)=3$ if $G$ is non-abelian but its representation on $\CC^2$ is reducible, and $\dim(A)=4$ otherwise. 

In all cases, we are able to explicitly construct the Hermitian form $H$ preserved by $G$. We will see that a constructive proof is necessary to recover our second criterion for unitarity, in terms of $\dim(A)$, and is instrumental in moving beyond the case of Heun operators. Connecting with another result of Beukers~\cite[Prop.~4]{beukers2007unitary}, quoted as Proposition~\ref{prop:prop3} below, this also allows explicit construction of real-analytic solutions to second-order differential equations that admit them.

Finally, we apply our results to extend a conjecture of Beukers on the spectra of Lam\'e operators~\cite[Conj.~1]{beukers2007unitary}. Leveraging a similar asymptotic estimate and numerical algorithm as Beukers, we provide evidence that the spectra of Darboux operators (which differ from Heun operators only by a change of variable~\cite{sirota2006heun,10.1063/1.3367079}) should closely align with a (squared) parameter-dependent lattice. In so doing, we also provide evidence that our results can be strengthened in the case of Heun and Darboux equations, with fewer trace conditions required to determine unitarity.

\section{Background on Unitary Monodromy} 
We give a brief background on the central topics necessary for our analysis. In particular, we briefly introduce the fundamentals of monodromy groups and unitarity, and we outline how they apply to the Heun and Darboux equations.

\label{sec:Background}
\subsection{Monodromy and Local Exponents}
Fix a Riemann surface $X$ and a discrete set of punctures $\{x_k\}=S\subset X$, and consider the differential equation 
\begin{equation}\label{eq:example}
    \frac{d^2y}{dx^2} + P(x) \frac{dy}{dx} + Q(x) y = 0,
\end{equation}
where $P(x)$ and $Q(x)$ are holomorphic on $X\setminus S$. We remark that $X$ is general, and that the equation need not be Fuchsian---i.e., it is not necessary that $xP(x)$ and $x^2Q(x)$ be holomorphic. Let $\Gamma_i$ be closed, simple, counter-clockwise loops around each $x_i$ that do not encircle any $x_j\neq x_i$, each starting at a common point $p \notin S$.

Let $y_1$ and $y_2$ be two linearly independent solutions of~\eqref{eq:example} in a neighborhood $U\ni p$. We can analytically continue $y_1$ and $y_2$ about $\Gamma_k$, resulting in the new solutions $\widetilde{y}_1$ and $\widetilde{y}_2$ in $U$, respectively. In such a case, there exists an  $M_k\in\GL(2,\CC)$ such that
\[M_k \begin{pmatrix}
    \,y_1\\
    \,y_2
\end{pmatrix} =\begin{pmatrix}
    \,\widetilde{y}_1\\
    \,\widetilde{y}_2
\end{pmatrix}\]
everywhere in $U$. The matrix $M_k$ is known as the \emph{monodromy matrix} around $\Gamma_k$, and the group $G\subset\GL(2,\CC)$ generated by $\{M_k\}$ is known as the \emph{monodromy group} of~\eqref{eq:example}. Notably, the mapping 
\[\pi_1(X\setminus S)\to G,\;\Gamma_k\mapsto M_k\]
is a group homomorphism.


A key application of monodromy matrices is that they quantify the asymptotic behavior of a differential equation near its singularities. In a sufficiently small neighborhood of a regular singularity $x_k$, we can generically find $y_1$ and $y_2$ such that $y_1 = (x-x_k)^{a_k} u_1$ and $y_2 = (x-x_k)^{b_k} u_2$, where $u_1$ and $u_2$ are nonzero analytic functions and $a_k,b_k\in\CC$. We refer to $a_k$ and $b_k$ as the \textit{local exponents} of~\eqref{eq:example} at $x_k$. In general, if the monodromy matrix $M_k$ has eigenvalues $(e^{2\pi i a_k},e^{2\pi i b_k})$, we define the local exponents at $x_k$ to be $(a_k,b_k)$, modulo $\ZZ$.


\subsection{Unitarity} \label{subsec:unitarity}
Given a nondegenerate Hermitian form $H$ on $\CC^2$, we define
the \emph{unitary group} of $H$ to be
\[\U(H) = \{g \in \GL(2,\CC) \;|\; g^\dagger H g = H\},\]
generalizing the classical unitary group $\U_2 = \U(\mathbbm{1}_2)$. In general, we define \emph{unitarity} as follows:
\begin{definition}\label{def:unitary}
    A matrix group $G\subset\GL(2,\CC)$ is \emph{unitary} if $G\subset \U(H)$ for a nondegenerate Hermitian form $H$.
\end{definition}
Importantly, unitarity is preserved under conjugation; if $h \in \GL(2,\CC)$, then we have
\[h^{-1} \U(H) h = \U(h^\dagger H h).\]
With this property in mind, we can break unitarity into two basic cases. If the form $H$ is (positive or negative) definite, it is conjugate to the Euclidean form $\pm\mathbbm{1}_2$, and any group that preserves it must be conjugate to a subgroup of $\U_2$. Similarly, if the form $H$ is indefinite, the unitary group $U(H)$ is given (up to conjugation) by the following well-known result (cf.~\cite{beukers2007unitary}):
\begin{proposition}
\label{prop:prop1} 
Let 
\begin{equation}\label{eq:standard_form}
    H_0 = \begin{pmatrix}
 & +i\,\\
-i & \\
\end{pmatrix}.
\end{equation}
Then $\U(H_0)\simeq S^1\times \SL(2,\RR)$ is the group generated by $\SL(2,\RR)$ and the scalars $\lambda\in S^1$. 
\end{proposition}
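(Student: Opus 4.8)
The plan is to prove Proposition~\ref{prop:prop1} by directly computing the stabilizer of $H_0$ under the action $g \mapsto g^\dagger H_0 g$ and identifying it with the stated group. First I would write a general matrix $g = \begin{pmatrix} a & b \\ c & d \end{pmatrix} \in \op{GL}(2,\Comp)$ and impose the condition $g^\dagger H_0 g = H_0$. Since $H_0^\dagger = H_0$, the quantity $g^\dagger H_0 g$ is automatically Hermitian, so this matrix equation produces relations among the entries of $g$. Expanding $g^\dagger H_0 g$ entrywise gives one equation from each of the off-diagonal entries (which are conjugate, hence one independent complex equation) and two from the diagonal entries (forcing them to vanish). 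The off-diagonal entry yields a relation of the form $\conj{a} d - \conj{b} c = 1$, while the diagonal conditions force $\Imag(\conj{a}b) = 0$ and $\Imag(\conj{c}d) = 0$; I would simplify these to the statement that $ad - bc$ is a unimodular scalar and that $g$ is a scalar multiple of a real matrix.

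The key structural step is to show that every $g \in \op{U}(H_0)$ factors as $g = \lambda \cdot r$ with $\lambda \in S^1$ and $r \in \op{SL}(2,\Real)$. I would argue as follows: writing $\det g = \rho e^{i\theta}$, the unimodularity relation extracted above forces $|\det g| = 1$, so $\det g = e^{i\theta}$. Setting $\lambda = e^{i\theta/2}$, the matrix $r = \lambda^{-1} g$ then has determinant $\pm 1$; I would check against the entrywise relations that in fact $\det r = 1$ and that all entries of $r$ are real, so $r \in \op{SL}(2,\Real)$. Conversely, one verifies immediately that $\op{SL}(2,\Real)$ preserves $H_0$ (this is the classical isomorphism $\op{SL}(2,\Real) \simeq \op{SU}(1,1)$ after a Cayley transform, but it can be checked by direct substitution since $H_0$ has real entries up to the factor $i$), and that any scalar $\lambda \in S^1$ satisfies $(\lambda \mathbbm{1}_2)^\dagger H_0 (\lambda \mathbbm{1}_2) = |\lambda|^2 H_0 = H_0$. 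Hence $\op{U}(H_0)$ is exactly the set of products $\lambda r$.

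Finally, to obtain the claimed isomorphism $\op{U}(H_0) \simeq S^1 \times \op{SL}(2,\Real)$ I would examine the map $(\lambda, r) \mapsto \lambda r$ and compute its kernel. The ambiguity in the factorization comes from scalars lying in both factors: $\lambda r = \lambda' r'$ forces $\lambda/\lambda' = r'/r$ to be a real scalar of modulus one, i.e. $\pm 1$, so the kernel is the order-two subgroup generated by $(-1, -\mathbbm{1}_2)$. The map is therefore a surjection with kernel $\mathbb{Z}/2$, giving $\op{U}(H_0) \simeq (S^1 \times \op{SL}(2,\Real))/\{\pm 1\}$; I would note that since $S^1$ is divisible, this quotient is itself isomorphic to $S^1 \times \op{SL}(2,\Real)$ via $(\lambda, r) \mapsto (\lambda^2, r)$ on the $S^1$ factor (or simply present the group as generated by the two subgroups, matching the statement's phrasing). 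The main obstacle I anticipate is bookkeeping rather than conceptual: carefully disentangling the polar form of $\det g$ to extract the scalar $\lambda$ cleanly, and treating the kernel of the multiplication map so that the stated isomorphism is justified rather than merely asserted. Everything else reduces to a short, routine entrywise computation.
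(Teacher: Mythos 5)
The paper never proves this proposition---it is quoted as well known, with a citation to Beukers---so there is no in-paper argument to compare against; a direct computation like yours is the natural way to supply one, and its core is sound. Two corrections to the computation itself. Expanding $g^\dagger H_0 g=H_0$ with $g=\begin{pmatrix} a & b\\ c & d\end{pmatrix}$, the diagonal entries give $\Imag(\ovl{a}c)=\Imag(\ovl{b}d)=0$ (each \emph{column} of $g$ lies on a real line through the origin), and the off-diagonal entry gives $\ovl{a}d-\ovl{c}b=1$; the relations you wrote, $\Imag(\ovl{a}b)=\Imag(\ovl{c}d)=0$ and $\ovl{a}d-\ovl{b}c=1$, are what one gets from expanding $gH_0g^\dagger=H_0$ instead. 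They happen to cut out the same set, because $S^1\cdot\op{SL}(2,\Real)$ is closed under transposition, but as written your derivation does not match the equation being solved. With the column version, the factorization goes through exactly as you outline: write the two columns as $e^{i\alpha}v_1$, $e^{i\beta}v_2$ with $v_1,v_2$ real; the relation $\ovl{a}d-\ovl{c}b=e^{i(\beta-\alpha)}\det\begin{pmatrix}v_1 & v_2\end{pmatrix}=1$ forces $e^{i(\beta-\alpha)}=\pm 1$, whence $g=\lambda r$ with $\lambda\in S^1$, $r\in\op{SL}(2,\Real)$; the converse inclusion follows from $r^TH_0r=\det(r)H_0$ and $|\lambda|^2H_0=H_0$.

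The genuine error is in your last step. You correctly identify $\op{U}(H_0)\cong\bigl(S^1\times\op{SL}(2,\Real)\bigr)/\langle(-1,-\mathbbm{1}_2)\rangle$, but the proposed identification of this quotient with $S^1\times\op{SL}(2,\Real)$ via ``$(\lambda,r)\mapsto(\lambda^2,r)$ on the $S^1$ factor'' fails: that map is a homomorphism with kernel $\{(\pm 1,\mathbbm{1}_2)\}$, which is \emph{not} the diagonal subgroup $\langle(-1,-\mathbbm{1}_2)\rangle$, so it does not descend to the quotient at all. Worse, no repair is possible: an isomorphism of groups carries center to center, and $Z(\op{U}(H_0))$ is the scalar circle $S^1\cdot\mathbbm{1}_2\cong S^1$, which is divisible, while $Z\bigl(S^1\times\op{SL}(2,\Real)\bigr)=S^1\times\{\pm\mathbbm{1}_2\}$ is not divisible (the element $(1,-\mathbbm{1}_2)$ has no square root). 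So, read literally as an abstract-group isomorphism, the ``$\simeq$'' in the proposition is false, and the correct content---which is all the paper ever uses, and which its following remark spells out by declaring the $S^1$ factor to be the determinant---is precisely your fallback phrasing: $\op{U}(H_0)$ is the group generated by $\op{SL}(2,\Real)$ and the unit scalars, equivalently the quotient $\bigl(S^1\times\op{SL}(2,\Real)\bigr)/\langle(-1,-\mathbbm{1}_2)\rangle$, fitting in the exact sequence $1\to\op{SL}(2,\Real)\to\op{U}(H_0)\xrightarrow{\det}S^1\to 1$. Drop the divisibility claim and state and prove that version; the rest of your outline then goes through.
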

\begin{remark*}
    Now, the standard embedding of $S^1\subset\GL(2,\CC)$ in terms of diagonal matrices intersects $\SL(2,\RR)$ at the two points $\{\pm I\}$; we clarify that the factor of $S^1$ in $\U(H_0)\simeq S^1\times\SL(2,\RR)$ instead parameterizes the \emph{determinant} of a matrix $g\in\U(H_0)$, so that $\{\pm I\}$ is in the kernel of the projection $\U(H_0)\to S^1$.
\end{remark*}

\subsection{The Lam\'e, Heun, and Darboux Equations}\label{sec:Background_eqs}
Several aspects of our analysis are inspired by Frits Beukers' work on the Lamé equation~\cite{beukers2007unitary}. In algebraic form, the Lam\'e equation reads as follows:
\begin{equation} \label{eq:lameAlgebraic}
p(x)\frac{d^2y}{dx^2} + \frac{1}{2}p'(x)\frac{dy}{dx} + (Ax - B) y = 0,
\end{equation}
where $p(x)= 4x(x-1)(x-a)$ and $A,B\in\CC$, and $x$ takes values in $\PP^1$. Comparing to~\eqref{eq:Heun}, we see that the Lamé equation is a special case of the Heun equation with $\gamma=\delta=\eps=1/2$; Beukers was interested in the particular subcase $A=1/4$, or equivalently, $\alpha=\beta=1/4$.

As with the Heun equation, the Lamé equation has four regular singularities: $z_1=0$, $z_2=1$, $z_3=a$, $z_0=\infty$. Unlike the Heun equation, however, the first three singularities all carry the same local exponents $(0,1/2)$. From the discussion above, this implies that the corresponding monodromy matrices $P=M_1$, $Q=M_2$, and $R=M_3$ have the eigenvalues $\pm 1$; we refer to such matrices as \emph{reflections} below, though they may not be orthogonal reflections. Moreover, the product $M = PQR$ (which has local exponents $(1/4,1/4)$) is \emph{parabolic} in the following sense:
\begin{definition}\label{def:parabolic}
    A matrix $M$ is \emph{parabolic} if it has two identical eigenvalues in $S^1$ and is not diagonalizable.
\end{definition}
Using these properties of the monodromy group, Beukers was able to give a necessary and sufficient criterion for unitarity in the case of the Lamé equation with $A=1/4$; the following proposition is transcribed from~\cite[Prop.~2]{beukers2007unitary}:
\begin{proposition}[Beukers, 2007]\label{prop:beukers}
    Suppose $P,Q,R\in\GL(2,\CC)$ are reflections, and their product $M=PQR$ is parabolic. Let $G$ be the group generated by $P$, $Q$, and  $R$. Then the following statements are equivalent:
    \begin{enumerate}
        \item The group $G$ is unitary.
        \item The traces $\tr(PQ)$, $\tr(QR)$, and $\tr(PR)$ are real.
        \item The traces $t_{PQ}=\tr(PQ)$ and $t_{QR}=\tr(QR)$ are real and satisfy $(t_{PQ}^2-4)(t_{QR}^2-4)\geq 16$.
    \end{enumerate}
\end{proposition}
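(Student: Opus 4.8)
The plan is to establish the cycle $(1)\Rightarrow(2)\Rightarrow(1)$ together with the separate equivalence $(2)\Leftrightarrow(3)$, exploiting the rigidity forced on $P,Q,R$ by the hypotheses. First I would record the normalization that drives everything: since each reflection has eigenvalues $\pm1$ it is a trace-zero involution with $\det=-1$, so $\det(PQ)=\det(QR)=\det(PR)=1$ while $\det(PQR)=-1$; and because $M=PQR$ is parabolic, its repeated eigenvalue $\lambda$ satisfies $\lambda^2=\det M=-1$, forcing $\lambda=\pm i\in S^1$ and hence $\tr(PQR)=\pm 2i$. This single fact is what powers both nontrivial directions.

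For $(1)\Rightarrow(2)$ I would argue straight from the defining relation. If $g\in\op{U}(H)$ then $g^\dagger=Hg^{-1}H^{-1}$, so $g^\dagger$ and $g^{-1}$ are conjugate and share a trace; reading this off gives $\overline{\tr(g)}=\tr(g^\dagger)=\tr(g^{-1})=\tr(g)/\det(g)$, valid for any $2\times2$ matrix. Applying it to $g=PQ,QR,PR$, each of determinant $1$, yields $\overline{\tr(PQ)}=\tr(PQ)$ and likewise for $QR$ and $PR$, which is exactly $(2)$.

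The substantive direction is $(2)\Rightarrow(1)$, and here I would pass to a rescaled group. Set $P'=P/i$, $Q'=Q/i$, $R'=R/i$, so that $P',Q',R'\in\op{SL}(2,\Comp)$; since the scalar $i$ lies in $S^1\subset\op{U}(H)$ for every nondegenerate Hermitian $H$, the group $G'=\langle P',Q',R'\rangle$ is unitary if and only if $G$ is. The payoff of rescaling is that all seven Fricke coordinates of $G'$ become manifestly real: the three generator traces vanish, the three pairwise traces equal $-\tr(PQ),-\tr(QR),-\tr(PR)$ (real by $(2)$), and the triple trace is $\tr(PQR)/i^{3}=\pm2$ (real by the parabolic normalization above). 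Because the trace of every word in $\op{SL}(2,\Comp)$ is an integer polynomial in these seven coordinates, $G'$ has real character, and the theorem of Morgan--Shalen and Goldman quoted in the Introduction then yields that $G'$, and hence $G$, is unitary.

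Finally, for $(2)\Leftrightarrow(3)$ I would extract the relation among the pairwise traces. Feeding the traces of $P',Q',R'$ into the Fricke--Vogt identity: with the three generator traces zero, the two roots $\tr(P'Q'R')$ and $\tr(P'R'Q')$ of the associated quadratic sum to $0$ and equal $\pm2$, so their product pins the constant term and collapses the identity to
\[\tr(PQ)^2+\tr(QR)^2+\tr(PR)^2=\tr(PQ)\,\tr(QR)\,\tr(PR).\]
Viewing this as a quadratic in $w=\tr(PR)$ with $t_{PQ}=\tr(PQ)$ and $t_{QR}=\tr(QR)$ real, the third trace $w$ is real precisely when the discriminant $t_{PQ}^2t_{QR}^2-4t_{PQ}^2-4t_{QR}^2$ is nonnegative; since this equals $(t_{PQ}^2-4)(t_{QR}^2-4)-16$, the condition is exactly $(t_{PQ}^2-4)(t_{QR}^2-4)\ge16$, so $(2)$ and $(3)$ say the same thing. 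I expect the main obstacle to sit inside $(2)\Rightarrow(1)$, in the reducible case: the Morgan--Shalen/Goldman correspondence is cleanest for irreducible representations, where an invariant form arises as a Schur intertwiner between $\rho$ and its conjugate--dual $g\mapsto(\rho(g)^\dagger)^{-1}$ (whose characters agree exactly when the character is real), and one must still check this intertwiner can be chosen Hermitian and \emph{nondegenerate}. When $G'$ is reducible---for instance when the parabolic $M$ shares an eigenline with the reflections---this must be replaced by a hands-on construction of $H$ on an invariant decomposition, and verifying nondegeneracy there is the delicate point; this is precisely the situation treated in full generality by the results developed later in the paper.
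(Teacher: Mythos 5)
Your architecture is sound, and it in fact reconstructs Beukers' original argument rather than anything in this paper: the paper never proves this proposition (it is transcribed verbatim from Beukers), and instead recovers it afterwards as the specialization of Proposition~\ref{prop:general}/Theorem~\ref{thm:verygeneral} to the Lam\'e parameters together with the trace identity $\tr(PQ)^2+\tr(QR)^2+\tr(PR)^2=\tr(PQ)\tr(QR)\tr(PR)$---precisely the identity you extract from the Fricke--Vogt relations. Your $(1)\Rightarrow(2)$ is correct (for $g\in\op{U}(H)$ one has $\ovl{\tr(g)}=\tr(g^{-1})$, and $\det(PQ)=\det(QR)=\det(PR)=1$), the rescaling $P'=P/i$ is legitimate since scalars in $S^1$ preserve every Hermitian form, and the $(2)\Leftrightarrow(3)$ step is correct: $\tr(PR)$ is a root of the real quadratic $w^2-t_{PQ}t_{QR}w+(t_{PQ}^2+t_{QR}^2)=0$, whose discriminant equals $(t_{PQ}^2-4)(t_{QR}^2-4)-16$, so reality of the third trace is exactly the stated inequality.

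The genuine gap sits exactly where you suspected it, but your proposed escape route does not work. In $(2)\Rightarrow(1)$ you invoke Morgan--Shalen/Goldman, which requires irreducibility; for reducible subgroups of $\op{SL}(2,\Comp)$, real character does \emph{not} imply unitarity, as the paper's own counterexample after Lemma~\ref{lem:realtraces} shows: the unipotent matrices $\begin{pmatrix}1&z_p\\0&1\end{pmatrix}$ and $\begin{pmatrix}1&z_q\\0&1\end{pmatrix}$ with $z_p\ovl{z}_q\notin\Real$ generate a group with every word trace real that preserves no nondegenerate Hermitian form. So deferring the reducible case to ``a hands-on construction of $H$ on an invariant decomposition'' (or to Lemma~\ref{lem:reducible}) cannot close the argument---in the reducible world the trace conditions are simply insufficient, and no construction exists. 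What closes the argument is the observation that the hypotheses of the proposition make reducibility impossible: if $G$ (equivalently $G'$) were reducible, the invariant line would be spanned by a common eigenvector $v$ of $P$, $Q$, $R$; each reflection acts on $v$ by $\pm 1$, so $Mv=PQRv=\pm v$. But $\det M=(-1)^3=-1$ and $M$ has a repeated eigenvalue $\lambda$, so $\lambda^2=-1$ and the only eigenvalue of $M$ is $\pm i$, a contradiction. (Note this uses only the eigenvalue data of $M$, not its non-diagonalizability, so it also rules out the scalar case $M=\pm i\mathbbm{1}_2$.) With irreducibility established, your Schur-intertwiner sketch---or the paper's Proposition~\ref{prop:general}, whose seven trace conditions you have already verified for $P',Q',R'$---finishes $(2)\Rightarrow(1)$. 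The fix is two lines, but without it the central implication rests on a theorem whose hypothesis you never verify and whose conclusion fails without it.
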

This result provides significant insight into the accessory parameter problem~\eqref{qs:app2} in Beukers' case of interest. An accessory parameter $B$ gives rise to unitary monodromy if and only if the local monodromy matrices satisfy either of the above trace conditions. These trace conditions are easily verified on a computer, so Beukers was able to leverage his result to numerically estimate accessory parameters that give rise to unitary monodromy.

Recall that the Heun equation~\eqref{eq:Heun} encompasses all second-order equations on $\PP^1$ with at most four singular points, greatly generalizing the setting of Proposition~\ref{prop:beukers}. With this level of generality, however, one sacrifices the strong constraints on local monodromy matrices that formed the basis of Beukers' argument; instead, the local exponents of the Heun equation are given by (cf.~\cite{xia2021isomonodromy})
\begin{equation}\label{eq:localexp_heun}
    \begin{aligned}
    &\text{At } z=0 ,\quad \left(0, 1-\gamma \right)\\
    &\text{At } z=1 ,\quad \left(0, 1-\delta \right)\\
    &\text{At } z=a ,\quad \left(0, 1-\varepsilon \right)\\
    &\text{At } z=\infty ,\quad \left(\alpha, \beta \right).
    \end{aligned}
\end{equation}
Although one might be interested in the general case $\gamma,\delta,\eps,\alpha,\beta\notin\RR$, the expressions \eqref{eq:localexp_heun} for the local exponents severely restrict the parameter space that could exhibit unitary monodromy:
\begin{proposition}\label{prop:onlyreal}
    The monodromy group of the Heun equation~\eqref{eq:Heun} can only be unitary if $\gamma,\delta,\varepsilon\in\RR$ and either $\alpha,\beta\in\RR$ or $\alpha-\ovl{\beta}\in\ZZ$. 
\end{proposition}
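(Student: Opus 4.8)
The plan is to extract a constraint on the eigenvalues of each local monodromy matrix directly from the unitarity hypothesis, and then to read off the restrictions on the local exponents from~\eqref{eq:localexp_heun}. The central observation, which I would isolate as a short lemma, is this: if $g\in\op{U}(H)$ for a nondegenerate Hermitian form $H$, then $g^\dagger = H g^{-1} H^{-1}$, so $g^\dagger$ is conjugate to $g^{-1}$. Since the eigenvalues of $g^\dagger$ are the complex conjugates of those of $g$, while the eigenvalues of $g^{-1}$ are their reciprocals, the multiset $\{\lambda_1,\lambda_2\}$ of eigenvalues of $g$ must be invariant under the involution $\iota:\lambda\mapsto 1/\ovl{\lambda}$. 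This uses only that $H$ is invertible, so it applies to definite and indefinite forms alike, and it requires no assumption of diagonalizability. Since the proposition asserts only necessity, I would assume $G\subset\op{U}(H)$ throughout, so that every generator $M_k$ lies in $\op{U}(H)$ and has $\iota$-invariant spectrum.

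First I would treat the three finite singularities. By~\eqref{eq:localexp_heun}, the matrix at $z=0$ has eigenvalues $\{1,e^{-2\pi i\gamma}\}$. Because $\iota(1)=1$, the involution must send $e^{-2\pi i\gamma}$ either to itself or to $1$; the latter forces $e^{-2\pi i\gamma}=1$, i.e.\ $\gamma\in\mathbb{Z}$, and in either situation $e^{-2\pi i\gamma}$ is a fixed point of $\iota$. Hence $|e^{-2\pi i\gamma}|=1$, which gives $\gamma\in\Real$. The identical argument at $z=1$ and $z=a$ yields $\delta,\varepsilon\in\Real$.

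The point at infinity requires a genuine case split, and this is the only step I expect to need care. Its eigenvalues are $\{e^{2\pi i\alpha},e^{2\pi i\beta}\}$, and $\iota$-invariance of a two-element multiset leaves exactly two branches. If $\iota$ fixes each eigenvalue, then both lie on the unit circle, giving $\alpha,\beta\in\Real$. Otherwise $\iota$ swaps them, so $\iota(e^{2\pi i\alpha})=e^{2\pi i\beta}$; comparing moduli gives $\Imag\alpha=-\Imag\beta$, and comparing arguments gives $\Ree\alpha-\Ree\beta\in\mathbb{Z}$. Writing $\alpha-\ovl{\beta}=(\Ree\alpha-\Ree\beta)+i(\Imag\alpha+\Imag\beta)$ and substituting these two relations collapses the imaginary part to zero and leaves $\alpha-\ovl{\beta}\in\mathbb{Z}$. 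The degenerate subcase $e^{2\pi i\alpha}=e^{2\pi i\beta}$ falls under the fixed-point branch and again gives $\alpha,\beta\in\Real$, so the two branches together produce precisely the stated dichotomy. Combining the finite-point conclusions with the behavior at infinity completes the argument. The main subtlety throughout is bookkeeping the coincident-eigenvalue cases, ensuring that the $\iota$-invariance constraint is applied to the eigenvalue multiset rather than to an ordered pair, so that no valid parameter configuration is inadvertently excluded or admitted.
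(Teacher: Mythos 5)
Your proof is correct, and it takes a genuinely different route from the paper's. The paper argues in two separate steps: first, the determinant identity $\det(g^\dagger Hg)=|\det g|^2\det(H)$ forces $|\det g|=1$ for each local monodromy matrix, which yields $\gamma,\delta,\eps\in\Real$ and $\Imag(\alpha+\beta)=0$; second, it invokes Lemma~\ref{lem:realtraces} (a forward reference, itself resting on the classification in Proposition~\ref{prop:prop1}) to conclude $2\cos(\pi(\alpha-\beta))=\tr(PQR)/\sqrt{\det(PQR)}\in\Real$, and then combines the two constraints to get the dichotomy at infinity. You instead isolate a single elementary lemma---$g\in\op{U}(H)$ implies $g^\dagger=Hg^{-1}H^{-1}$, hence the eigenvalue multiset of $g$ is invariant under $\iota:\lambda\mapsto 1/\ovl{\lambda}$---and apply it uniformly to all four local monodromies, reading the parameter restrictions off~\eqref{eq:localexp_heun}; your case analysis at infinity (fixed versus swapped eigenvalues, with the coincident case folded into the fixed branch) is sound and reproduces exactly the stated dichotomy. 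Your lemma in fact subsumes both of the paper's ingredients: $\iota$-invariance of $\{\lambda_1,\lambda_2\}$ gives $|\lambda_1\lambda_2|=1$ (the determinant condition), and for unit-determinant elements it gives $\lambda+\lambda^{-1}\in\Real$, i.e.\ an independent elementary proof of Lemma~\ref{lem:realtraces}. What your approach buys: it is self-contained (no forward reference, no appeal to the classification of unitary groups of definite and indefinite forms), it handles all singularities by one mechanism, and the multiset formulation covers non-diagonalizable matrices for free. What the paper's approach buys: it reuses machinery needed anyway---Lemma~\ref{lem:realtraces} and the $|\det g|=1$ observation both reappear in Theorem~\ref{thm:verygeneral}---so within the paper's architecture nothing extra has to be proved, whereas your lemma would need to be added as a standalone statement.
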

\begin{proof}
    Write $P$, $Q$, and $R$ for the monodromy matrices about $0$, $1$, and $a$, respectively, and suppose $G$ preserves a Hermitian form $H$. From the expressions~\eqref{eq:localexp_heun}, we find $\det(P)=e^{-2\pi i\gamma}$, and thus
    \[\det(P^\dagger H P) = |e^{-2\pi i\gamma}|^2\det(H) = e^{4\pi\Im(\gamma)}\det(H).\]
    Since $H$ must be preserved under the action of $G$, this implies that $\gamma\in\RR$, and similarly for $\delta$, $\varepsilon$, and $\alpha+\beta$. It will follow from Lemma \ref{lem:realtraces} below that we must also have 
    \[2\cos(\pi(\alpha - \beta)) = \frac{\tr(PQR)}{\sqrt{\det(PQR)}}\in\RR,\]
    which can only be satisfied if either $\alpha-\beta\in\RR$ or $\Re(\alpha-\beta)\in\ZZ$. These correspond to the two conditions given in the proposition statement.
\end{proof}


Finally, we note that the Heun equation is commonly expressed in terms of elliptic functions, using a change of variable discussed in~\cite{sirota2006heun,10.1063/1.3367079}. For this, take values $\omega_1,\omega_2\in\CC$ (which we will fix shortly), and let $\wp(z)$ be the Weierstrass function~\cite{cartan1995elementary} with half-periods $\omega_1$, $\omega_2$, and $\omega_3 = \omega_1+\omega_2$. Write $e_i = \wp(\omega_i)$, and fix $\omega_i$ such that
\[a = \frac{e_2-e_1}{e_3-e_1}.\]
Substitute $\wp(z) = e_1 + (e_2-e_1)x$, and let 
\begin{equation}\label{eq:params_D2H}
    m_0 = \alpha - \beta - \frac{1}{2}, \qquad m_1 = \frac{1}{2}-\gamma,\qquad
m_2 = \frac{1}{2}-\delta, \qquad m_3 = \frac{1}{2}-\varepsilon.
\end{equation}
Finally, write
\[u = x^{-m_1/2}(x-1)^{-m_2/2}(x-a)^{-m_3/2}y.\]
With this change of variables, we convert~\eqref{eq:Heun} into the \emph{Darboux} equation:
\begin{equation} \label{eq:Darboux}
    \frac{d^2u}{dz^2} - \left(\sum_{i=0}^3 m_i (m_i+1) \wp(z-\omega_i)\right)u = B'u,
\end{equation}
where we write $\omega_0 = 0$.
Here, $B' = (e_3 - e_1)B + B_0$ for a fixed $B_0 = B_0(\alpha,\beta,\gamma,\delta,\eps)$. A full expression of $B_0$ is given in~\cite{2001math.....12179T}, for instance, but we do not make use of it here. The Darboux equation has the local exponents
\begin{equation}\label{eq:localexp_darboux}
    (-m_i,1+m_i)\qquad\text{at } z=\omega_i,
\end{equation}
which can be deduced from~\eqref{eq:localexp_heun} by applying the described coordinate transform. Notably, all four monodromy matrices have unit determinant \emph{a priori}, so the analogue of Proposition~\ref{prop:onlyreal} would not restrict us to particular parameter values; indeed, we will see in Section~\eqref{sec:HeunAnalysis} that the Darboux equation satisfies~\eqref{qs:app} for a far wider range of parameters than the Heun equation.

The Lamé equation~\eqref{eq:lameAlgebraic} takes on a particularly simple form in these coordinates, as $m_1$, $m_2$, and $m_3$ all vanish:
\begin{equation} \label{eq:Darboux_Lame}
    \frac{d^2u}{dz^2} - m_0(m_0+1)\wp(z) u = B'u,
\end{equation}
noting that $u(z)=y(x)$ in this case.

\section{Unitary Matrix Groups}\label{sec:matrices}
We can make progress on the problem~\eqref{qs:app} by considering monodromy groups in purely linear-algebraic terms, i.e., as subgroups of $\GL(2,\CC)$. We emphasize that the correspondence between a linear differential operator and its monodromy group is in general highly transcendental. Nevertheless, an answer to the following question provides a group-theoretic criterion for any second-order operator to satisfy~\eqref{qs:app}, once the group $G$ has been determined:
\begin{equation}
	\tag{B}\label{qs:app3}
	\parbox{\dimexpr\linewidth-4em}{%
		\strut\itshape
		Which subgroups $G\subset\GL(2,\CC)$ are unitary, in the sense of Definition~\ref{def:unitary}?
		\strut
	}
\end{equation}
We answer this question completely in the present section. Proposition~\ref{prop:general} first answers the question for groups $G\subset\SL(2,\CC)$ generated by three elements, as we encounter in the Heun~\eqref{eq:Heun} and Darboux~\eqref{eq:Darboux} equations; this case also follows from the results of Morgan and Shalen~\cite[Prop.~III.1.1]{Morgan1984} and Goldman~\cite[Thm.~4.3]{Goldman1988}, but the constructive proof we offer is necessary to extend to arbitrary subgroups of $\GL(2,\CC)$ in Theorem~\ref{thm:verygeneral}. The conclusion of Theorem~\ref{thm:verygeneral} overlaps substantially with a recent result of Adachi~\cite{adachi_monodromy_2022,adachi2023unitarymonodromiesrankfuchsian} for Fuchsian equations on the Riemann sphere, but offers several distinct advantages: it applies to irregular equations, multiply-connected Riemann surfaces, and reducible monodromy; it is comparatively elementary and accessible to non-specialists; and it offers a new viewpoint on unitarity based on the dimension of the real algebra generated by a rescaled monodromy group.

For any subgroup $G\subset\GL(2,\CC)$, we define a rescaled group
\begin{equation*}
    G' = \left\{\pm g/\sqrt{\det g}\;\big|\; g\in G\right\} \subset\SL(2,\CC).
\end{equation*}
In one direction, we show that for subgroups $G\subset\GL(2,\CC)$ for which the natural representation of $G$ on $\CC^2$ is \emph{reducible}, the rescaled group $G'$ must be contained within either $\SO(2)$ or the group $\RR\rtimes\RR^\times\subset\SL(2,\RR)$ of real, upper-triangular matrices; if the representation of $G$ on $\CC^2$ is \emph{irreducible}, we will show that unitarity is instead equivalent to a set of easily-verified trace conditions on an arbitrary set of generators. In a different direction, we show  that these same constraints can be expressed as a more natural algebraic statement about $G'$: for both reducible and irreducible cases, unitarity is equivalent to a strict criterion on the real dimension of the algebra $A$ generated by $G'$. 


One direction of our general theorem follows straightforwardly from Proposition~\ref{prop:prop1}. We note that $G$ is unitary if and only if $G'$ is unitary and $|\det g| = 1$ for all $g\in G$, so we can restrict to subgroups $G\subset\SL(2,\CC)$ without any loss of generality. 

\begin{lemma}\label{lem:realtraces}
    Suppose $G\subset\SL(2,\CC)$ is unitary, in the sense of Definition~\ref{def:unitary}. Then all elements of $G$ have real trace.
\end{lemma}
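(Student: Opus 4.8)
The plan is to classify the preserved form $H$ by its signature and reduce to the two standard models supplied by the background section. By Definition~\ref{def:unitary} there is a nondegenerate Hermitian form $H$ with $g^\dagger H g = H$ for every $g \in G$. Two facts will drive the argument: first, $\tr$ is invariant under conjugation, so it suffices to prove the claim for any group conjugate to $G$; second, the conjugation identity $h^{-1}\op{U}(H)h = \op{U}(h^\dagger H h)$ lets me replace $H$ by $h^\dagger H h$ for any $h \in \op{GL}(2,\Comp)$. By Sylvester's law of inertia, a nondegenerate Hermitian form on $\Comp^2$ is congruent either to a definite form $\pm\mathbbm{1}_2$ or to an indefinite form, which I may take to be $H_0$ of~\eqref{eq:standard_form}. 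I therefore split into these two cases.

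In the definite case, after conjugation I may assume $H = \pm\mathbbm{1}_2$, so that $G \subset \op{U}_2$; combined with the hypothesis $G \subset \op{SL}(2,\Comp)$, this gives $G \subset \op{U}_2 \cap \op{SL}(2,\Comp) = \op{SU}_2$. Every element of $\op{SU}_2$ has the form $\left(\begin{smallmatrix} a & b \\ -\conj b & \conj a \end{smallmatrix}\right)$ and hence trace $2\Ree(a) \in \Real$. Since the trace is conjugation-invariant, the original elements of $G$ have real trace as well.

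In the indefinite case, after conjugation I may assume $H = H_0$, so $G \subset \op{U}(H_0)$. By Proposition~\ref{prop:prop1}, every element of $\op{U}(H_0)$ factors as $\lambda M$ with $\lambda \in S^1$ and $M \in \op{SL}(2,\Real)$. The determinant of such an element equals $\lambda^2$, and since $g \in \op{SL}(2,\Comp)$ forces $\lambda^2 = 1$, I conclude $g = \pm M$ with $M$ real; its trace $\pm\tr(M)$ is then real. Conjugation-invariance of the trace again transfers this conclusion to the original group $G$.

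The content here is light, and the one point that requires care is the determinant bookkeeping in the indefinite case: I must observe that intersecting the scalar factor $S^1$ of $\op{U}(H_0)\simeq S^1\times\op{SL}(2,\Real)$ with the unit-determinant condition collapses it to $\{\pm 1\}$, so that each $g$ is genuinely a real matrix up to sign rather than an arbitrary unimodular multiple of one. Everything else reduces to Sylvester's classification of Hermitian forms together with the invariance of the trace under conjugation.
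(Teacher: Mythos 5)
Your proof is correct and takes essentially the same approach as the paper: split by the signature of $H$, conjugate it to $\pm\mathbbm{1}_2$ or to $H_0$, and reduce to $\op{SU}_2$ in the definite case and to $\op{SL}(2,\Real)$ in the indefinite case. The only cosmetic differences are that the paper argues via eigenvalues $\lambda,\lambda^{-1}\in S^1$ where you use the explicit parametrization of $\op{SU}_2$, and your determinant bookkeeping ($\det(\lambda M)=\lambda^2=1$, so $g=\pm M$) spells out a step the paper leaves implicit.
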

\begin{proof}
    Suppose $G\subset\U(H)$ for a nondegenerate Hermitian form $H$. If $H$ is indefinite, for one, it is conjugate to the form $H_0$ of Proposition~\ref{prop:prop1}. But then $G$ is conjugate to a subgroup of $\U(H_0)\simeq \U_1\otimes\SL(2,\RR)$; since $G\subset\SL(2,\CC)$ by hypothesis, it must in fact be conjugate to a subgroup of $\SL(2,\RR)$, from which the claim follows.

    If $H$ is definite, it is conjugate to the Euclidean form (up to sign), and $G$ is conjugate to a subgroup of $\SU_2$. Any element of $\SU_2$ has eigenvalues $\lambda,\lambda^{-1}\in S^1$, which implies our result.
\end{proof}

The converse of Lemma~\ref{lem:realtraces} does not generally hold: indeed, consider the matrices
\begin{equation}\label{eq:reducible_fail}
    P=\begin{pmatrix} 
    \,1\phantom{.} & z_p \\  & 1
\end{pmatrix},\qquad Q=\begin{pmatrix} 
    \,1\phantom{.} & z_q \\  & 1
\end{pmatrix}, \qquad z_p\ovl{z}_q\notin\RR.
\end{equation}
It is clear that $P$, $Q$, and all products thereof have real trace. However, suppose they both preserve a Hermitian form
\[H=\begin{pmatrix} \,h_{11} & h_{12} \\ \,\ovl{h}_{12} & h_{22} \end{pmatrix},\]
so that $P^\dagger H = H P^{-1}$ and $Q^\dagger H = H Q^{-1}$. Matching elements shows that $h_{11}=0$, implying $h_{12}\neq 0$, as well as $\ovl{z}_ph_{12} = -z_p \ovl{h}_{12}$ and $\ovl{z}_qh_{12} = -z_q \ovl{h}_{12}$. These conditions can only be met if $z_p\ovl{z}_q\in\RR$, and we reach a contradiction.

We first demonstrate that reducible monodromy groups (like \eqref{eq:reducible_fail}) are the only context in which the converse of Lemma~\ref{lem:realtraces} can fail. As discussed above, the next two results---Lemma~\ref{lem:gt2} and Proposition~\ref{prop:general}---follow from the work of Morgan and Shalen~\cite[Prop.~III.1.1]{Morgan1984} and Goldman~\cite[Thm.~4.3]{Goldman1988}. Even still, in order to generalize to Theorem~\ref{thm:verygeneral} below, we require a constructive proof of these statements; that is, given matrices $P,Q,R\in\SL(2,\CC)$, we must identify the particular Hermitian form $H$ they preserve. 

\begin{lemma} \label{lem:gt2}
    Fix matrices $P,Q,R\in\SL(2,\CC)$ with $\tr(P)\neq \pm2$, and let $G$ be the group generated by $\{P,Q,R\}$. Suppose the natural representation of $G$ on $\CC^2$ is irreducible. Then $G$ is unitary if and only if
    \begin{equation}\label{eq:allreal}
        \tr (P),\tr (Q), \tr (R), \tr (PQ), \tr (QR), \tr (PR), \tr (PQR)\in\RR.
    \end{equation}
\end{lemma}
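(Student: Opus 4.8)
The forward implication is immediate: if $G$ is unitary then Lemma~\ref{lem:realtraces} gives $\tr(g)\in\Real$ for every $g\in G$, and the seven words $P,Q,R,PQ,QR,PR,PQR$ all lie in $G$. The content is the converse, and the plan is to produce the invariant Hermitian form $H$ explicitly from the seven real traces.

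The first step is to upgrade the seven hypotheses to the statement that $\tr(g)\in\Real$ for \emph{all} $g\in G$. This is where the restriction to seven traces earns its keep: by the classical Fricke--Vogt trace identities in $\op{SL}(2,\Comp)$, the trace of an arbitrary word in $P,Q,R$ is a polynomial with integer coefficients in exactly the seven traces of~\eqref{eq:allreal}. Hence if those seven are real, every element of $G$ has real trace, so the natural representation $\rho\colon g\mapsto g$ and its complex conjugate $\conj\rho\colon g\mapsto\conj g$ share the same character. I expect establishing (or carefully citing) these three-generator identities to be the main obstacle, since the two-generator Fricke relations must be supplemented by the mixed trace $\tr(PQR)$ and its companions to close off all words.

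The second step is representation-theoretic. Since $\rho$ is irreducible, so is $\conj\rho$ (complex-conjugating a common invariant line of one family produces one for the other), and two irreducible two-dimensional representations with equal character are isomorphic in characteristic zero. Thus there is $K\in\op{GL}(2,\Comp)$, unique up to scalar by Schur's lemma, with $\conj g=KgK^{-1}$ for all $g\in G$; conjugating this relation and applying Schur again yields $\conj K\,K=cI$ for some $c\in\Real$.

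The third step converts $K$ into the desired form. Every $g\in\op{SL}(2,\Comp)$ preserves the symplectic form $\epsilon=\bigl(\begin{smallmatrix}0&1\\-1&0\end{smallmatrix}\bigr)$, i.e.\ $g^{T}\epsilon g=\epsilon$, and by irreducibility this is the only invariant bilinear form up to scale. Taking the transpose of $\conj g=KgK^{-1}$ gives $g^{\dagger}=(K^{T})^{-1}g^{T}K^{T}$, from which one checks directly that $H\doteq\nu\,(K^{-1})^{T}\epsilon$ satisfies $g^{\dagger}Hg=H$ for every $g$ and every constant $\nu$. It then remains only to fix the phase of $\nu$ so that $H^{\dagger}=H$: using $\conj K\,K=cI$ together with the adjugate identity $\epsilon M^{T}\epsilon^{-1}=\det(M)M^{-1}$, the Hermiticity requirement reduces to solving $\nu/\conj\nu=-\det(K)/c$, whose right-hand side has modulus one, so a unit $\nu$ works; nondegeneracy of $H$ is automatic since $K$ and $\epsilon$ are invertible. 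Finally, the hypothesis $\tr(P)\neq\pm2$ lets me make this construction fully explicit: diagonalizing $P$ exhibits $K$, and hence $H$, directly in terms of the entries of $Q$ and $R$, which is the explicit form needed to bootstrap Theorem~\ref{thm:verygeneral}.
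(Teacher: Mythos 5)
Your proof is correct, but it takes a genuinely different route from the paper's. The paper argues by direct computation: since $\tr(P)\neq\pm2$, it diagonalizes $P$, splits into the cases $|\tr(P)|>2$ and $|\tr(P)|<2$, and uses the seven trace conditions together with a further diagonal conjugation either to make $P$, $Q$, $R$ simultaneously real (so that $G\subset\op{SL}(2,\Real)$, which is unitary by Proposition~\ref{prop:prop1}) or to exhibit an explicit diagonal Hermitian form preserved by $G$, with irreducibility used only to exclude degenerate vanishing patterns of off-diagonal entries. You instead run the character-theoretic argument in the spirit of Morgan--Shalen and Goldman, which the paper cites but deliberately avoids: the reduction of \emph{all} traces to the seven listed ones is exactly the Vogt--Fricke/Horowitz theorem for three-generator subgroups of $\op{SL}(2,\Comp)$ --- the one external ingredient you must cite, and you rightly flag it as the crux --- after which real character gives $\rho\simeq\conj{\rho}$ by irreducibility, Schur's lemma yields the intertwiner $K$ with $\conj{K}K=cI$, $c\in\Real$, and your form $H=\nu\,(K^{-1})^{T}\epsilon$ is indeed $G$-invariant, Hermitian once the phase equation $\nu/\conj{\nu}=-\det(K)/c$ is solved (possible since taking determinants in $\conj{K}K=cI$ gives $|\det K|=|c|$), and nondegenerate; these computations all check out. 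Comparing the two: your route never actually uses $\tr(P)\neq\pm2$, so it proves the stronger Proposition~\ref{prop:general} in one stroke and explains conceptually why seven traces suffice, at the cost of invoking the nontrivial three-generator trace identities and producing $H$ only through the intertwiner $K$ rather than by entry-level formulas. The paper's proof is elementary and self-contained, and its explicitness --- the form is pinned down by two non-commuting generators, with formulas in their matrix entries --- is precisely what the authors lean on to bootstrap to arbitrary generating sets in Theorem~\ref{thm:verygeneral}; your $H$ enjoys the analogous uniqueness (up to real scale, by Schur), so that bootstrap would also go through, though with less explicit output.
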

\begin{proof} 
The forward direction (unitarity$\implies$real traces) follows from Lemma~\ref{lem:realtraces}, so we focus here on the converse direction.

Suppose that~\eqref{eq:allreal} holds. Since $\det(P)=1$ and $\tr(P)\neq\pm 2$, the matrix $P$ is necessarily diagonalizable. By conjugating our matrices appropriately, we can put $P$ in the form
\begin{equation}\label{eq:eigenbasis}
    P = \begin{pmatrix}\,
\lambda_P & \\
 & \lambda_P^{-1}
\end{pmatrix},\qquad \lambda_P\neq \pm 1,
\end{equation}
and in the same basis, we write
\begin{equation}\label{eq:QR_elements}
    Q = \begin{pmatrix} \,q_{11} & q_{12} \\ \,q_{21} & q_{22} \end{pmatrix},\qquad R = \begin{pmatrix} \,r_{11} & r_{12} \\ \,r_{21} & r_{22} \end{pmatrix}.
\end{equation}
We split the remainder of our proof into cases, depending on whether $|\tr(P)|>2$ or $|\tr(P)|<2$.

\emph{First case.} Suppose that $|\tr(P)|>2$, so that $\lambda_P\in\RR$. Since we know that $\lambda_P\neq\pm 1$, the requirement that $\tr(Q),\tr(PQ)\in\RR$ yields two linearly-independent conditions on $q_{11}$ and $q_{22}$:
\[q_{11} + q_{22}\in\RR,\qquad \lambda_Pq_{11} + \lambda_P^{-1}q_{22}\in\RR.\]
Combining these appropriately shows that $q_{11},q_{22}\in\RR$, and we similarly find $r_{11},r_{22}\in\RR$. But then, as
\begin{equation}\label{eq:QR_PQR}
    \begin{gathered}
        \tr(QR)=q_{11}r_{11} + q_{22}r_{22}+q_{12}r_{21}+q_{21}r_{12}\in\RR,\\
        \tr(PQR)=\lambda_P q_{11}r_{11} + \lambda_P^{-1}q_{22}r_{22}+\lambda_Pq_{12}r_{21}+\lambda_P^{-1}q_{21}r_{12}\in\RR,
    \end{gathered}
\end{equation}
we deduce that $q_{12}r_{21},q_{21}r_{12}\in\RR$. Consider a conjugation of our group by the matrix
\begin{equation}\label{eq:transform}
    T=\begin{pmatrix}\, t^{-\frac{1}{2}} &  \\  & t^{\frac{1}{2}} \end{pmatrix},
\end{equation}
with $t\in\CC$. This transformation preserves $P$ and the diagonal elements of $Q$ and $R$, but maps $q_{12}\mapsto q_{12}t^{-1}$ and $q_{21}\mapsto q_{21}t$, and likewise for $R$; notably, it leaves $q_{12}r_{21}$ and $q_{21}r_{12}$ unchanged. 

If we first suppose that $q_{12},q_{21}\neq 0$, we can fix a value $t$ such that $q_{12}\in\RR$, and thus that $r_{21}\in\RR$. Since $\det (Q)=1\in\RR$ and both diagonal elements of $Q$ are real, we must also have $q_{21}\in\RR$ (and thus $r_{12}\in\RR$) in this basis. A similar argument holds if any of the pairs $\{r_{12},r_{21}\}$, $\{q_{21},r_{12}\}$, or $\{q_{12},r_{21}\}$ are nonzero, or if at most one of $\{q_{12},q_{21},r_{12},r_{21}\}$ is nonzero.

The final possibility is that either $q_{12},r_{12}\neq 0$ or $q_{21},r_{21}\neq 0$, but the remaining off-diagonal elements vanish. This possibility is prevented by the hypothesis that the representation of $G$ on $\CC^2$ is irreducible.

\emph{Second case.} Suppose now that $|\tr(P)|<2$, so that $\lambda_P\in S^1$. We check that $G$ preserves a diagonal Hermitian form
\[H=\begin{pmatrix} \,h_{11} & \\ & h_{22}\end{pmatrix},\]
for as-of-yet unfixed (but nonzero) values $h_{11},h_{22}\in\RR$. We expand out the required equation $Q^\dagger H \overset{?}{=} HQ^{-1}$ as follows:
\begin{equation}\label{eq:elementequation}
    \begin{pmatrix}
       \, h_{11}\ovl{q}_{11} & h_{22}\ovl{q}_{21}\\
        \,h_{11}\ovl{q}_{12} & h_{22}\ovl{q}_{22}
    \end{pmatrix} \overset{?}{=} \begin{pmatrix}
        \,\phantom{-}h_{11}q_{22}\phantom{.} & -h_{11}q_{12}\\
        \,-h_{22}q_{21}\phantom{.} & \phantom{-}h_{22}q_{11}
    \end{pmatrix},
\end{equation}
and likewise for $R$. Matching matrix elements, we see that $Q,R\in\U(H)$ if and only if
\[q_{11} = \ovl{q}_{22},\qquad r_{11} =  \ovl{r}_{22},\qquad q_{12} =-h_{22} \ovl{q}_{21}/h_{11},\qquad r_{12} =-h_{22} \ovl{r}_{21}/h_{11}.\]
From here, one can also deduce that $q_{21}r_{12}=\ovl{q_{12}r_{21}}$. 

If none of the off-diagonal elements vanish, we fix $h_{11}=1$ and 
\[h_{22}=-\frac{r_{12}}{\ovl{r}_{21}}=-\frac{q_{12}}{\ovl{q}_{21}}.\]
Indeed, this value must be real, as
\[r_{12}r_{21} = r_{11}r_{22} - 1 \in\RR.\]
Suppose instead that $q_{12},q_{21}\neq 0$, but either $r_{12}=0$ or $r_{21}=0$; the identity $q_{21}r_{12}=\ovl{q_{12}r_{21}}$ then shows that $R$ is diagonal; the choice $h_{22} = -q_{12}/\ovl{q}_{21}$ works as before. A similar argument holds in the case $r_{12},r_{21}\neq 0$, and the same identity prevents the case $q_{12},r_{21}\neq 0$, $q_{21},r_{12}=0$ or its reverse. Finally, the irreducibility hypothesis prevents either $q_{21}=r_{21}=0$ or $q_{12}=r_{12}=0$.
\end{proof}

We now extend our result to the case where $\tr(P)=\pm 2$. Since $P$ may no longer be diagonalizable, we apply somewhat different techniques to recover a preserved Hermitian form in this case. 
\begin{proposition} \label{prop:general}
    Fix matrices $P,Q,R\in\SL(2,\CC)$, and let $G$ be the group generated by $\{P,Q,R\}$. Suppose the representation of $G$ on $\CC^2$ is irreducible. Then $G$ is unitary if and only if
    \begin{equation*}
        \tr (P),\tr (Q), \tr (R),\tr (PQ), \tr (QR), \tr (PR), \tr (PQR)\in\RR.
    \end{equation*}
\end{proposition}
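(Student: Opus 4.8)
The plan is to reduce to Lemma~\ref{lem:gt2} whenever possible and to treat the genuinely new case---when all three generators have trace $\pm2$---by hand, constructing $H$ explicitly. The forward direction (unitarity $\Rightarrow$ real traces) is immediate from Lemma~\ref{lem:realtraces}, so I focus on the converse, assuming the seven traces are real and the action is irreducible.

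First I would observe that reality of the seven traces is insensitive to reordering the generators. The only trace not manifestly invariant under a permutation of $\{P,Q,R\}$ is the triple product, and the Fricke relation
\begin{equation*}
\tr(PQR)+\tr(PRQ)=\tr(P)\tr(QR)+\tr(Q)\tr(PR)+\tr(R)\tr(PQ)-\tr(P)\tr(Q)\tr(R)
\end{equation*}
expresses $\tr(PRQ)$---and hence the triple-product trace of every reordering, each being a cyclic image of $PQR$ or of $PRQ$---as a real quantity. Consequently, if any one of $P,Q,R$ has trace $\neq\pm2$, I relabel so that it plays the role of the first generator in Lemma~\ref{lem:gt2} and conclude directly. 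This leaves only the case $\tr(P)=\tr(Q)=\tr(R)=\pm2$.

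In that case each generator is $\pm$(a unipotent matrix). Since $g^\dagger Hg=(-g)^\dagger H(-g)$, a form is preserved by $g$ iff it is preserved by $-g$, and negating a generator changes each of the seven traces only by a sign while leaving irreducibility intact; I may therefore assume $P,Q,R$ are all unipotent. Not all three can be the identity (that would be reducible), so relabelling if necessary I take $P$ to be a non-trivial unipotent and, after conjugation, $P=\left(\begin{smallmatrix}1&1\\0&1\end{smallmatrix}\right)$. A direct computation shows that the Hermitian forms fixed by $P$ are exactly
\begin{equation*}
H(\beta,c)=\begin{pmatrix}0 & i\beta\\ -i\beta & c\end{pmatrix},\qquad \beta,c\in\Real,
\end{equation*}
nondegenerate precisely when $\beta\neq0$ (and equal to $\beta H_0$ when $c=0$, consistent with Proposition~\ref{prop:prop1}). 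It remains to choose $\beta\neq0$ and $c$ so that $Q$ and $R$ also lie in $\op{U}(H(\beta,c))$.

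I would then impose $Q^\dagger H(\beta,c)\,Q=H(\beta,c)$ (and likewise for $R$) and read off the constraints. Reality of $\tr(PQ)=2+q_{21}$ forces the lower-left entry $q_{21}\in\Real$, and matching the remaining entries, together with $\det Q=1$ and $\tr Q=2$, reduces to a pair of real linear equations in $(\beta,c)$ for $Q$ and a second pair for $R$. The crux is to show this homogeneous system admits a solution with $\beta\neq0$: I expect reality of $\tr(QR)$ and $\tr(PQR)$ to furnish exactly the compatibility that makes the $Q$-equations and $R$-equations consistent. Irreducibility enters to exclude the degenerate configurations---most importantly $q_{21}=r_{21}=0$, which would make the first basis vector a common eigenvector---so that at least one generator pins down $c$ through its off-diagonal entry and the resulting $H$ is nondegenerate. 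I expect this consistency verification, and the bookkeeping of the sub-cases in which various off-diagonal entries vanish, to be the main obstacle; the conceptual reason it must succeed is that the seven real Fricke traces force every element of $G$ to have real trace, so the irreducible representation is isomorphic to its complex conjugate and thus preserves a nondegenerate Hermitian form.
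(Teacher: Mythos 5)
Your reductions are sound: the forward direction via Lemma~\ref{lem:realtraces}, the relabeling step (the Fricke identity you quote is correct, though overkill---the seven conditions are manifestly invariant under \emph{cyclic} permutations, which already suffice to bring a generator of trace $\neq\pm2$ to the front, exactly as the paper does before invoking Lemma~\ref{lem:gt2}), the reduction by signs to unipotent generators, and your description of the $P$-invariant forms $H(\beta,c)$ is exactly right. However, the proof as written stops at what you yourself call the crux: you never verify that reality of $\tr(QR)$ and $\tr(PQR)$ makes the $Q$-equations and $R$-equations simultaneously solvable with $\beta\neq0$; you only say you ``expect'' it. In this residual case that verification \emph{is} the proposition, so as submitted this is a genuine gap. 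The good news is that it closes cleanly, along the lines you predicted. Write a unipotent matrix as
\begin{equation*}
Q=\begin{pmatrix}1+u & v\\ w & 1-u\end{pmatrix},\qquad u^2+vw=0\quad(\det Q=1),
\end{equation*}
and similarly for $R$. Since $\tr(PM)=\tr(M)+m_{21}$ for $P$ the standard Jordan block, reality of $\tr(PQ)$ and $\tr(PR)$ gives $w_Q,w_R\in\Real$, and irreducibility gives (say) $w_R\neq0$. Expanding $Q^\dagger H(\beta,c)Q=H(\beta,c)$ entrywise yields the two real linear equations
\begin{equation*}
2\Imag(u)\,\beta+w\,c=0,\qquad \Imag(v)\,\beta-\Ree(u)\,c=0,
\end{equation*}
whose coefficient determinant is $-2\Imag(u)\Ree(u)-w\Imag(v)=-\Imag(u^2+vw)=0$, using $w\in\Real$ and $u^2+vw=0$. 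Hence, when $w\neq0$, the solution set is the full line spanned by $(\beta,c)=(w,-2\Imag(u))$, and $R$ alone pins down the line through $(w_R,-2\Imag(u_R))$, which has $\beta=w_R\neq0$ (nondegenerate, as $\det H(\beta,c)=-\beta^2$). If $w_Q\neq0$, the $Q$-line and $R$-line coincide if and only if $w_Q\Imag(u_R)-w_R\Imag(u_Q)=0$; but this quantity is exactly $\Imag\bigl((QR)_{21}\bigr)=\Imag\bigl(\tr(PQR)-\tr(QR)\bigr)=0$ by hypothesis. If instead $w_Q=0$, then $u_Q=0$, and $\tr(QR)=2+v_Qw_R\in\Real$ with $w_R\in\Real\setminus\{0\}$ forces $v_Q\in\Real$, so such a $Q$ preserves \emph{every} $H(\beta,c)$. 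Either way, $H\bigl(w_R,-2\Imag(u_R)\bigr)$ is a nondegenerate form preserved by $P$, $Q$, and $R$.

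Once completed this way, your route is genuinely different from---and arguably more uniform than---the paper's. In the residual case the paper first reduces, via the substitution $(P,Q,R)\mapsto(PQ,Q^{-1},R)$ and Cayley--Hamilton ($Q+Q^{-1}=2$), to the situation where $\tr(PQ)$, $\tr(QR)$, $\tr(PR)$, $\tr(PQR)$ are all $\pm2$; it then eliminates the sign pattern $(-2,-2,-2,+2)$ by an explicit contradiction, and finally lands on a normal form with $Q\in\{1,P\}$ and $R$ in an explicit one-parameter family, for which it exhibits $H$. Your argument needs none of that case analysis, handles arbitrary real values of the pairwise traces directly, and still produces $H$ explicitly from the generators---the feature the paper actually needs later, in the proof of Theorem~\ref{thm:verygeneral}. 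Two further remarks. First, your closing ``conceptual reason'' (all words have real trace, so the irreducible representation is equivalent to its conjugate-dual and preserves a nondegenerate Hermitian form) is precisely the Morgan--Shalen/Goldman argument that the paper cites and deliberately avoids because it is non-constructive; it would prove the proposition but not serve the later theorem. Second, a point in your favor: the form displayed at the end of the paper's own proof, with $\Imag(s)$ in the upper-left entry, is not actually $P$-invariant as written; the invariant form is $\left(\begin{smallmatrix}0 & i\\ -i & \Imag(s)\end{smallmatrix}\right)$, which is exactly the member $\beta=1$, $c=\Imag(s)$ of your family.
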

\begin{proof}
    If any of $M\in\{P,Q,R\}$ satisfy $\tr(M)\neq\pm 2$, we can cycle the matrices until $\tr(P)\neq\pm 2$ and subsequently apply Lemma~\ref{lem:gt2}. As such, we suppose without loss of generality that $\tr(P),\tr(Q),\tr(R)=+2$; otherwise, we replace matrices with their negations as appropriate.
    
    Suppose now that $\tr(PQ)\neq\pm 2$, and define the matrices 
    \begin{equation}\label{eq:cycleperm}
        P' = PQ,\qquad Q' = Q^{-1},\qquad R'=R.
    \end{equation}
    These matrices generate the group $G$, and they satisfy
    \[\tr(P'),\tr(Q'),\tr(R'),\tr(P'Q'),\tr(P'R'),\tr(P'Q'R')\in\RR\]
    by the conditions~\eqref{eq:allreal}, noting that $\tr(Q^{-1}) = \tr(Q)=+2$. It remains to check $\tr(Q'R')$. For this, recall that every matrix satisfies its own characteristic polynomial---in particular,
    \[Q^2 - 2Q + I = 0,\]
    or equivalently,
    \[Q + Q^{-1} = 2I.\]
    Then we find
    \[\tr(Q'R') = \tr(Q^{-1}R) = 2\tr(R) - \tr(QR)\in\RR.\]
    As such, the matrices~\eqref{eq:cycleperm} satisfy the hypotheses of Lemma~\ref{lem:gt2}, proving unitarity. A similar argument applies to $QR$, $PR$, and $PQR=P'R'$, so we can assume that
    \[\tr(PQ),\tr(QR),\tr(PR),\tr(PQR)=\pm 2.\]

    Suppose also that $P$ is parabolic, in the sense of Definition~\ref{def:parabolic}; if none of $\{P,Q,R\}$ are parabolic, the result follows trivially (but also, the irreducibility hypothesis is violated). Bring $P$ to Jordan normal form as
    \begin{equation}\label{eq:jordan}
        P = \begin{pmatrix} \,1\phantom{.} & 1 \\  & 1\end{pmatrix},
    \end{equation}
    and write $Q$ and $R$ as in~\eqref{eq:QR_elements}. 
    
    We suppose first that $\tr(PQ)=\tr(PR)=\tr(QR)=-2$ and $\tr(PQR)=+2$, so that
    \[q_{21} = \tr(PQ) - \tr(Q) = -4,\qquad r_{21}=\tr(PR) - \tr(R)=-4=q_{21},\]
    and consequently,
    \begin{align*}\tr(QR) = q_{11}r_{11}+q_{22}r_{22} + q_{12}r_{21}+q_{21}r_{12}&=q_{11}r_{11} + q_{22}r_{22} + q_{12}q_{21} + r_{12}r_{21}\\
    &=q_{11}r_{11} + q_{22}r_{22} + q_{11}q_{22} + r_{11}r_{22} - 2\\
    &=(q_{11}+r_{22})(q_{22}+r_{11})- 2,
    \end{align*}
    applying $\det(Q)=\det(R)=1$. Then $\tr(QR)=-2$ implies either $q_{11}+r_{22}=0$ or $q_{22}+r_{11}=0$, but we also see that
    \[\tr(PQR) - \tr(QR) = q_{22}r_{21}+q_{21}r_{11} = -4(q_{22}+r_{11}) = 4,\]
    so that $q_{22}+r_{11}=-1$. From above, then, $q_{11}+r_{22}=0$, so we reach a contradiction:
    \[4 = \tr(Q)+\tr(R) = q_{11}+q_{22}+r_{11}+r_{22}=-1.\]
    As such, we see that either $\tr(PQR)=-2$ or at least one of $\tr(PQ)$, $\tr(PR)$, or $\tr(QR)$ is $+2$. In the former case, we make the replacement 
    \[P''=-PQ,\qquad Q''=Q^{-1},\qquad R''=R.\] 
    These matrices satisfy $\tr(P'')=\tr(Q'')=\tr(R'')=+2$, by hypothesis, and 
    \[\tr(P''Q'') = -\tr(PQR) = +2.\]
    In either case, then, we can suppose that 
    \[\tr(P)=\tr(Q)=\tr(R)=\tr(PQ)=+2.\]
    The same analysis as above shows that $q_{21}=0$ in this basis, and thus (from the fact that $\det(Q)=1$) that $q_{11}=q_{22}=1$. Since $\tr(PR)=+2$ would similarly imply that $r_{21}=0$, and thus violate our irreducibility hypothesis, we take $\tr(PR)=-2$, which fixes $r_{21}=-4$. 

    In this case, we have $\tr(QR)=\pm2$, and so
    \[\tr(QR)-\tr(R) = q_{12}r_{21} = -4q_{12} = \pm 2 - 2,\]
    implying either $q_{12}=1$ (and so $Q=P$) or $q_{12}=0$ (and so $Q=1$). In either case, the only conditions on $R$ are that $\tr(R)=+2$, $\det(R)=1$, and $\tr(PR)=-2$, so it can take the general form
    \[R=\begin{pmatrix}
        \,1+2s\phantom{.} & s^2\\ -4\phantom{.} & 1-2s
    \end{pmatrix},\qquad s\in\CC.\]
    All three of $\{P,Q,R\}$ then preserve the Hermitian form
    \begin{equation}\label{eq:H_weird}
        H=\begin{pmatrix}
        \,\Im(s)\phantom{.}& -i\\
        i\phantom{.} &
    \end{pmatrix},
    \end{equation}
    proving the theorem.
\end{proof}

We move on now to the case where the representation of $G$ on $\CC^2$ is \emph{reducible}, first showing that any unitary group must be conjugate to a subgroup of one of two model groups:
\begin{lemma}\label{lem:reducible}
    Suppose $G\subset\SL(2,\CC)$, and suppose the natural representation of $G$ on $\CC^2$ is reducible. In this case, $G$ is unitary if and only if it is conjugate to a subgroup of either $\SO(2)\subset\SL(2,\CC)$ or the group $\RR\rtimes\RR^\times\subset\SL(2,\RR)$ of real upper-triangular matrices. 
\end{lemma}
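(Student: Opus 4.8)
The plan is to prove both implications, handling the converse quickly and devoting the real effort to the forward direction, which I would organize around the definite/indefinite dichotomy for the preserved form recalled in Section~\ref{subsec:unitarity}. For the converse, suppose $G$ is conjugate to a subgroup of one of the two model groups. If $G$ is conjugate into $\op{SO}(2,\Real)$, then every element $g$ is real orthogonal, so $g^\dagger\mathbbm{1}_2 g=g^T g=\mathbbm{1}_2$ and the model group preserves the positive-definite form $\mathbbm{1}_2$; if $G$ is conjugate into $\Real\rtimes\Real^\times\subset\op{SL}(2,\Real)$, then by Proposition~\ref{prop:prop1} the model group preserves $H_0$. Since unitarity is preserved under conjugation (Section~\ref{subsec:unitarity}), $G$ is unitary in either case.

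For the forward direction, suppose $G\subset\op{U}(H)$ for a nondegenerate Hermitian $H$, with the representation reducible, so that $G$ has a common eigenvector. If $H$ is definite, then $G$ is conjugate into $\op{SU}(2)$. Because elements of $\op{SU}(2)$ are Euclidean-orthogonal, the orthogonal complement of the common eigenvector is also $G$-invariant, so $G$ is simultaneously diagonalizable in an orthonormal basis, i.e.\ conjugate by a unitary into the diagonal torus $\{\op{diag}(e^{i\theta},e^{-i\theta})\}$. A single further unitary conjugation by the fixed matrix that diagonalizes planar rotations carries this torus onto $\op{SO}(2,\Real)$, which settles the definite case.

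If instead $H$ is indefinite, Proposition~\ref{prop:prop1} together with $G\subset\op{SL}(2,\Comp)$ places $G$, up to conjugacy, inside $\op{U}(H_0)\cap\op{SL}(2,\Comp)=\op{SL}(2,\Real)$. I would then exploit reducibility over $\Comp$ by choosing a common eigenvector $v$; since the matrices are real, $\bar v$ is a common eigenvector as well. If $v$ is proportional to a real vector, a real conjugation sends it to $e_1$ and makes $G$ real upper-triangular, so $G\subset\Real\rtimes\Real^\times$. If $v$ is not real-proportional, then $v,\bar v$ are independent and $G$ is simultaneously diagonalized by $\{v,\bar v\}$ with conjugate eigenvalue pairs $\lambda_g,\bar\lambda_g$; the determinant condition $\lambda_g\bar\lambda_g=1$ forces $|\lambda_g|=1$, so every element is elliptic and $G$ is abelian. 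Writing $v=u+iw$ with $u,w\in\Real^2$ then exhibits a real basis in which each $g$ acts as a planar rotation, placing $G$ up to real conjugacy inside $\op{SO}(2,\Real)$.

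The hard part will be the indefinite case: one must cleanly separate the real-eigenvector branch (yielding $\Real\rtimes\Real^\times$) from the genuinely complex branch (yielding $\op{SO}(2,\Real)$), justify via the complex-conjugate eigenvector that the latter forces an abelian group of elliptic elements, and verify that the real change of basis $\{u,w\}$ realizes these elements as honest rotations landing in $\op{SO}(2,\Real)$ rather than merely $\op{O}(2,\Real)$ or a scaled copy, which requires a little determinant and orientation bookkeeping. By contrast, the definite case and the converse should be routine consequences of orthogonality and Proposition~\ref{prop:prop1}.
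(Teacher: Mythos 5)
Your proof is correct, but it takes a genuinely different route from the paper's. You organize the forward direction around the signature of the preserved form $H$: in the definite case you conjugate $G$ into $\op{SU}(2)$, use orthogonality of the invariant complement to diagonalize simultaneously, and carry the diagonal torus onto $\op{SO}(2,\Real)$; in the indefinite case you invoke Proposition~\ref{prop:prop1} to place $G$ inside $\op{SL}(2,\Real)$ and then split on whether the common eigenvector $v$ is proportional to a real vector (giving $\Real\rtimes\Real^\times$) or genuinely complex, where $v$ and $\bar v$ simultaneously diagonalize $G$ with conjugate unit-modulus eigenvalues and the real basis $\{u,w\}$ from $v=u+iw$ realizes $G$ as rotations. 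The paper instead conjugates $G$ to upper-triangular form at the outset (using reducibility), fixes $P,Q\in G$, and splits on $|\tr(P)|>2$, $|\tr(P)|<2$, and the degenerate case $\tr(g)=\pm 2$ for all $g\in G$, in each case matching entries of $P^\dagger H=HP^{-1}$ and $Q^\dagger H=HQ^{-1}$ to pin down both $H$ and the group. Your approach is shorter and more conceptual, leaning on Proposition~\ref{prop:prop1} and the conjugate-eigenvector symmetry of real matrices rather than on entry-by-entry computation; the paper's computation is heavier but fits the explicitly constructive style it needs elsewhere (the forms and normalizations found here feed into Theorem~\ref{thm:verygeneral}), and it isolates the parabolic case $\tr\equiv\pm 2$ explicitly, which in your framing is silently and correctly absorbed into the real-eigenvector branch of the indefinite case. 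The orientation bookkeeping you flagged does work out with no extra input: $gv=e^{i\theta}v$ with $g$ real gives $gu=\cos\theta\,u-\sin\theta\,w$ and $gw=\sin\theta\,u+\cos\theta\,w$, so in the basis $\{u,w\}$ every element is literally a rotation matrix, landing in $\op{SO}(2,\Real)$ rather than merely $\op{O}(2,\Real)$.
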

\begin{proof}
    First, suppose $G$ is unitary, and fix $P,Q\in G$. By the reducibility hypothesis, we can conjugate $G$ so that all $g\in G$ are upper-triangular. If $\tr(P)\neq\pm 2$, we can bring it into the form~\eqref{eq:eigenbasis} while retaining the upper-triangular form of $Q$.

    Suppose $G$ preserves a Hermitian form
    \begin{equation}\label{eq:hermitianform}
    H=\begin{pmatrix}
        \,h_{11} & h_{12}\\\,\ovl{h}_{12} & h_{22}
    \end{pmatrix}.
    \end{equation}
    If $|\tr(P)|>2$, matching elements of the equation $P^\dagger H = HP^{-1}$ shows that $h_{11}=h_{22}=0$; in particular, we can conjugate $G$ by the matrix~\eqref{eq:transform} to bring $H$ to the form~\eqref{eq:standard_form}, or a real multiple thereof. Proposition~\ref{prop:prop1} then shows that $G$ is conjugate to a subset of the group $\RR\rtimes\RR^\times\subset\SL(2,\CC)$ of real upper-triangular matrices.

    If $|\tr(P)|<2$, matching elements of $P^\dagger H=HP^{-1}$ instead shows that $h_{12}=0$, and subsequently matching elements of $Q^\dagger H = HQ^{-1}$ shows that $Q$ must take the form
    \[Q=\begin{pmatrix}
        \,\lambda_Q&\\&\lambda_Q^{-1}
    \end{pmatrix}.\]
    But from Lemma~\ref{lem:realtraces}, we know that $\tr(P)$, $\tr(Q)$, and $\tr(PQ)$ must be real; since $|\tr(P)|<2$, we have $\lambda_P\in S^1$, and these conditions show that $\lambda_Q\in S^1$ as well. Of course, the group of matrices of this form is simply an eigendecomposition of $\SO(2)$.

    The alternative is that $\tr(P)=\pm 2$ for every $P\in G$. Fix $P,Q\in G$ such that $P\neq\pm I$; if this is not possible, then $G$ is trivial. Noting that both of our model groups are closed under negation, suppose without loss of generality that $\tr(P)=\tr(Q)=+2$, so that
    \[P=\begin{pmatrix}\,1\phantom{.}& z_p\\& 1\end{pmatrix},\qquad Q=\begin{pmatrix}
        \,1\phantom{.} & z_q\\  & 1
    \end{pmatrix},\qquad z_p,z_q\in\CC.\]
    Moreover, conjugating by the matrix~\eqref{eq:transform} allows us to fix $z_p=1$. Suppose that $P$ and $Q$ preserve the form~\eqref{eq:hermitianform}. Matching elements yields $h_{11}=0$, implying $h_{12}\neq 0$, as well as $h_{12} = -\ovl{h}_{12}$ and $\ovl{z}_q h_{12}=-z_q\ovl{h}_{12}$; these conditions require $z_q\in\RR$, which implies our result. 
\end{proof}

Finally, we treat general subgroups of $\GL(2,\CC)$. In the reducible case, we use the classification of Lemma~\ref{lem:reducible} to deduce the dimension of $\RR\otimes G$ as a real algebra; since conjugation is an isomorphism of algebras, this property is basis-independent. To extend the irreducible case beyond the hypotheses of Proposition~\ref{prop:general}, we make use of the fact that its proof is constructive; given $P$, $Q$, and $R$, we can deduce the Hermitian form $H$ that they preserve. Indeed, even two of these matrices determine $H$ uniquely, so we can study a generic $R'\in G$ by looking at the group generated by $\{P,Q,R'\}$. We prove the following result; for convenience, we say that $G$ is \emph{reducible} if the natural representation of $G$ on $\CC^2$ is reducible, and \emph{irreducible} otherwise.
\begin{theorem}\label{thm:verygeneral}
    Suppose $G\subset\GL(2,\CC)$, and define the rescaled group
    \begin{equation}\label{eq:rescaled}
    G' = \left\{\pm g/\sqrt{\det g}\;\big|\; g\in G\right\}.
    \end{equation}
    Let $A$ be the real algebra generated by $G'$; if $G\subset\SL(2,\CC)$ already, we can equivalently consider $G'=G$. Then $G$ is unitary if and only if $|\det g|=1$ for all $g\in G$ and one of the following (mutually exclusive) criteria is satisfied:
    \begin{enumerate}
        \item $G\subset S^1$ is scalar; equivalently, $\dim(A)=1$.
        \item $G$ is abelian, and $\dim(A)=2$. Equivalently, $G'$ is conjugate to a non-scalar subgroup of $\SO(2)$, of the group $\RR^\times\subset\SL(2,\RR)$ of real diagonal matrices with unit determinant, or of the group $\ZZ_2\times\RR\subset\SL(2,\RR)$ of real upper-triangular matrices of trace $\pm 2$.
        \item $G$ is reducible but non-abelian, and $\dim(A)=3$. Equivalently, $G'$ is conjugate to a non-abelian subgroup of the group     $\RR^+\rtimes\RR^\times\subset\SL(2,\RR)$ of real upper-triangular matrices.
        \item $G$ is irreducible, and $\dim(A)=4$. Equivalently, $G$ is conjugate to an irreducible subgroup of either $\SL(2,\RR)$ or of $\U_2$. Equivalently, $G'$ is irreducible, and, for a generating set $S\subset G'$ and all $P,Q,R\in S$, we have \[\tr(P),\tr(Q),\tr(R),\tr(PQ),\tr(PR),\tr(QR)\in\RR\]
    and either $\tr(PQR)\in\RR$ or $\tr(PRQ)\in\RR$.
    \end{enumerate}
\end{theorem}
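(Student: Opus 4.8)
The plan is to push everything onto the rescaled group $G'\subset\op{SL}(2,\Comp)$ and then read unitarity off the structure of the real algebra $A$. First I would record the reduction flagged in the text. Taking determinants of $g^\dagger Hg=H$ gives $|\det g|^2\det H=\det H$, so $|\det g|=1$ is necessary; and when it holds, $g'=\pm g/\sqrt{\det g}$ differs from $g$ by a unit-modulus scalar, whence $g\in\op{U}(H)$ iff $g'\in\op{U}(H)$. Thus $G$ is unitary exactly when $|\det g|=1$ for every $g$ and $G'$ is unitary, and it remains to classify unitary subgroups of $\op{SL}(2,\Comp)$.

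For the forward direction, assume $G'\subset\op{U}(H)$. Lemma~\ref{lem:realtraces} gives real traces throughout. If $H$ is definite, $G'$ is conjugate into $\op{SU}_2$; if $H$ is indefinite, Proposition~\ref{prop:prop1} conjugates $G'$ into $\op{U}(H_0)\cap\op{SL}(2,\Comp)=\op{SL}(2,\Real)$. I would then split on reducibility: in the reducible case Lemma~\ref{lem:reducible} places $G'$ inside $\op{SO}(2,\Real)$ or the real upper-triangular group, and subdividing by scalar / abelian non-scalar / non-abelian lands $G'$ in criteria (1)--(3); the irreducible case is exactly the model-group half of criterion (4).

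Next I would set up the dimension dictionary from two bounds. A reducible $G'$ is simultaneously triangularizable, so after conjugation $A$ sits in the $3$-dimensional upper-triangular algebra, giving $\dim A\le 3$; and since every real unital algebra of dimension $\le 2$ is commutative, a non-abelian $G'$ forces $\dim A\ge 3$. The exact values then follow from the normal forms: scalar $G'$ gives $A=\Real I$ and $\dim A=1$; the abelian models of Lemma~\ref{lem:reducible} each generate a $2$-dimensional algebra, so abelian non-scalar gives $\dim A=2$; non-abelian reducible gives $\dim A=3$; and an irreducible unitary $G'$, conjugate into $\op{SL}(2,\Real)$ or $\op{SU}_2$, generates $M_2(\Real)$ or the quaternions $\mathbb{H}$ respectively, of real dimension $4$. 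This matches the four structural cases to $\dim(A)\in\{1,2,3,4\}$ and supplies the ``equivalently $\dim(A)=k$'' clauses.

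The heart of the argument is the backward direction of criterion (4). Criteria (1)--(3) are immediate: their model groups lie in $\op{SO}(2,\Real)\subset\op{SU}_2$ or in $\op{SL}(2,\Real)=\op{U}(H_0)\cap\op{SL}(2,\Comp)$, so Proposition~\ref{prop:prop1} already exhibits an invariant form. For (4) I hold only finitely many trace conditions on a generating set $S$ and must promote them to unitarity of the full irreducible $G'$. The first step upgrades ``$\tr(PQR)\in\Real$ or $\tr(PRQ)\in\Real$'' to both being real via the $\op{SL}_2$ identity
\[
\tr(PQR)+\tr(PRQ)=\tr(P)\tr(QR)+\tr(Q)\tr(PR)+\tr(R)\tr(PQ)-\tr(P)\tr(Q)\tr(R),
\]
whose right-hand side is real under the remaining hypotheses; with all single, pairwise, and triple generator traces real, the Fricke--Vogt theorem (the $\op{SL}_2$ trace ring is generated by traces of words of length $\le 3$) makes every element of $G'$ have real trace. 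The decisive step uses that Proposition~\ref{prop:general} is constructive: irreducibility and Schur's lemma make the invariant Hermitian form unique up to a real scalar, so one irreducible pair $P,Q\in G'$ fixes a single candidate $H$; applying Proposition~\ref{prop:general} to each triple $\{P,Q,g\}$ (irreducible, now all-real-trace) yields an invariant form that must equal $H$, so $G'\subset\op{U}(H)$. I expect this gluing---lifting the three-generator result to an arbitrary group, underwritten by the uniqueness from irreducibility---to be the main obstacle, with the trace bookkeeping as its necessary prelude.
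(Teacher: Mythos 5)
Your reduction to $\op{SL}(2,\Comp)$, your forward direction, and your backward direction from the model-group and trace formulations are sound. In the irreducible case your route is genuinely different from the paper's and works: where the paper verifies by hand that the Hermitian form constructed in Proposition~\ref{prop:general} depends only on the pair $P,Q$, you obtain the same gluing from Schur uniqueness of the invariant form of an irreducible group, after first making every trace in $G'$ real via the Fricke identity and the Fricke--Vogt theorem. That is a legitimate (arguably cleaner) alternative, at the cost of invoking Fricke--Vogt as a black box where the paper is self-contained and constructive.

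The genuine gap is the backward direction of the \emph{dimension} formulations, which the theorem asserts as equivalent criteria and to which the paper devotes roughly half of its proof: you never show that ``$G$ abelian and $\dim(A)=2$,'' ``$G$ reducible non-abelian and $\dim(A)=3$,'' or ``$G$ irreducible and $\dim(A)=4$'' imply unitarity. Your dimension dictionary is established only for groups already known to be unitary (it reads dimensions off the model forms supplied by Lemma~\ref{lem:reducible}), so it runs in the wrong direction; and the one structural bound you offer here is false. A reducible $G'$ is simultaneously triangularizable only over $\Comp$, and the \emph{complex} upper-triangular algebra has real dimension $6$, not $3$. Concretely, the abelian, reducible group generated by $P=\operatorname{diag}(2i,-i/2)$ has $\tr(P)=3i/2\notin\Real$, so $A$ contains both $\mathbbm{1}$ and $\tr(P)\mathbbm{1}=P+P^{-1}$, hence all of $\Comp\cdot\mathbbm{1}$, hence the full complex diagonal algebra: $\dim(A)=4$ despite reducibility (and $G'$ is not unitary). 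This example is exactly what the missing argument must exploit: the paper shows that any element of non-real trace forces $\Comp\cdot\mathbbm{1}\subset A$ and therefore a dimension jump, so the hypotheses $\dim(A)=2,3,4$ force every trace to be real, and only then can one place $G'$ inside the real model algebras (or, in the irreducible case, feed the now-real traces into the trace criterion). Without this step the equivalences involving $\dim(A)$---precisely the form of the criterion used later in Corollaries~\ref{cor:general_darboux} and~\ref{cor:general_heun}---remain unproven.
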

\begin{remark*}
    We will see that, if $G$ is irreducible and we can identify non-commuting elements $P,Q\in S$ with $|\tr(P)|\geq 2$, one can take $P$ and $Q$ to be fixed in the above trace conditions.
\end{remark*}
\begin{proof}
    If any $g\in G$ has $|\det g|\neq 1$, then it is clear that $G$ cannot be unitary; the proof of this fact follows the same lines as that of Proposition~\ref{prop:onlyreal}. We thus assume $|\det g|= 1$ for all $g\in G$. Furthermore, if $G$ is abelian, reducible, or unitary, then the rescaled group $G'$ shares the same property, and vice versa. As such, we suppose without loss of generality that $G\subset\SL(2,\CC)$, and handle each case in turn.

    \paragraph*{Abelian case.} Suppose first that $G$ is unitary, reducible, and abelian, but contains a non-scalar element; by Lemma~\ref{lem:reducible}, we know that $G$ is (up to conjugation) contained either in $\SO(2)\subset\SL(2,\CC)$, in the subgroup $\ZZ_2\times\RR\subset\SL(2,\CC)$ of real upper-triangular matrices of trace $\pm 2$, or in the subgroup $\RR^\times\subset\SL(2,\CC)$ of real diagonal matrices. All three generate an algebra of real dimension $2$. 

    Conversely, suppose $G$ is reducible and abelian and that $\dim(A)= 2$. Fix $P\in G$ with $P\neq\pm I$; if this is not possible, then $G$ is scalar. Now, if $\tr(P)\notin\RR$, then $P+P^{-1}=2\tr(P)$ would be linearly independent from $I$ over the reals; diagonalizing $P$, we thus see that $A$ would contain the ($4$ dimensional) algebra of complex diagonal matrices. As such, we know that $\tr(P)\in\RR$. If $\tr(P)\neq\pm 2$, we can write $P$ in the form~\eqref{eq:eigenbasis}; otherwise, we can write $P$ in the form~\eqref{eq:jordan}, up to sign. Now, the set $\{I,P\}$ already generates an algebra of real dimension $2$ in both cases. Indeed, if $|\tr(P)|>2$, this set spans the algebra of real diagonal matrices; if $|\tr(P)|<2$, it spans the same algebra as $\SO(2)$; and if $|\tr(P)|=2$, it spans the algebra of real upper-triangular matrices with repeated eigenvalues. By our hypothesis, then, $A$ (and thus $G$) is contained in this span. But Theorem~\ref{thm:verygeneral} shows that the intersection of any of these three algebras with $\SL(2,\CC)$ is unitary, so the same must be true of $G$.

    \paragraph*{Reducible, non-abelian case.}Next, suppose that $G$ is unitary, reducible, and non-abelian. From Lemma~\ref{lem:reducible}, we see that $G$ is contained (up to conjugation) in the group $\RR\rtimes\RR^\times\subset\SL(2,\CC)$ of real upper-triangular matrices, which generates an algebra of real dimension $3$. If we fix non-commuting $P,Q\in G$, then the algebra generated by $\{I,P,Q\}\subset G$ is of real dimension $\geq 3$, so it follows that $\dim(A) = 3$ exactly.

    Conversely, suppose that $G$ is reducible and non-abelian and that $\dim(A)= 3$. Fix non-commuting $P,Q\in G$. As before, we know that $\tr(P),\tr(Q),\tr(PQ)\in\RR$. If $\tr(P)\neq\pm 2$, we again write $P$ in the form~\eqref{eq:eigenbasis} and $Q$ in the form~\eqref{eq:QR_elements}, and the reducibility hypothesis allows us to fix $q_{21}=0$. If $|\tr(P)|>2$, the derived trace conditions show that the diagonals of both $P$ and $Q$ are real; if $|\tr(P)|<2$, they show that all diagonal elements are contained in $S^1$. In either case, the non-commuting condition requires $q_{12}\neq 0$, so, by conjugating $G$ appropriately, we fix $q_{12}=1$. As such, the real algebra $A'$ generated by $\{I,P,Q\}$ contains the one-dimensional space of real, \emph{strictly} upper-triangular matrices. If any of the diagonal elements of $P$ or $Q$ were not real, then $A'$ would also contain the space of \emph{complex}, strictly upper-triangular matrices, and we would find that $\dim(A')\geq 4$. Otherwise, $A'$ is exactly the algebra of real upper-triangular matrices, which is unitary by Proposition~\ref{prop:prop1}. The final case is that $\tr(P)=\pm 2$. We suppose without loss of generality that $\tr(P)=+2$, and write $P$ in the form~\eqref{eq:jordan}. For $P$ and $Q$ not to commute, then, we must have $q_{11},q_{22}\neq\pm 1$. If $q_{11},q_{22}\notin\RR$, the same argument as before would demonstrate that the real algebra $A'$ generated by $\{I,P,Q\}$ has $\dim(A')\geq 4$, and we would reach a contradiction. Thus, $q_{11},q_{22}\in\RR$, and the only way to satisfy $\dim(A')=3$ is to have $q_{12}\in\RR$ as well.
    
    \paragraph*{Irreducible case, trace conditions.} For the irreducible case, we first prove that unitarity is equivalent to the stated trace conditions. Fix non-commuting matrices $P,Q\in S$; since all irreducible representations of abelian groups are one-dimensional, such a choice must be possible. The forward direction (unitary$\implies$real traces) follows from Lemma~\ref{lem:realtraces}, so we focus on the converse.

    Suppose that $|\tr(P)|\neq 2$, and write $P$ in the form~\eqref{eq:eigenbasis} and $Q$ in the form~\eqref{eq:QR_elements}. If $|\tr(P)|>2$, then the trace conditions on $P$ and $Q$ guarantee that $\lambda_P,q_{11},q_{22}\in\RR$, and, with an additional conjugation by the matrix~\eqref{eq:transform}, we can further ensure that $q_{12},q_{21}\in\RR$. This is clear if one of the two elements vanishes---note that they cannot both vanish, or $P$ and $Q$ would commute. If neither element vanishes, we can choose $q_{12}\in\RR$ and apply the condition $\det(Q)=1$ to deduce $q_{21}\in\RR$. Without loss of generality, we suppose $q_{12}\neq 0$.

    Choose an $R\in S$, and note that $r_{11},r_{22}\in\RR$. Since $q_{12}\neq 0$, the trace conditions $\tr(QR),\tr(PQR)\in\RR$ (or $\tr(QR),\tr(QPR)\in\RR$) show that $q_{12}r_{21},q_{21}r_{12}\in\RR$ and thus that $r_{21}\in\RR$. If $r_{21}\neq 0$, the condition $\det(R)=1$ thus implies that $r_{12}\in\RR$. Otherwise, fix a matrix
    \[M=\begin{pmatrix}
        \,m_{11}&m_{12}\\\,m_{21}&m_{22}
    \end{pmatrix}\in S\]
    with $m_{21}\neq 0$; the existence of such a matrix follows from our irreducibility hypothesis. As before, we find that $m_{11},m_{22},m_{12},m_{21}\in\RR$, so the trace conditions $\tr(MR),\tr(PMR)\in\RR$ (or $\tr(MR),\tr(MPR)\in\RR$) imply that $r_{12}\in\RR$. Since $R$ is general, and our conjugation depends only on the matrices $P$ and $Q$, we find $G\subset\SL(2,\RR)$, so Proposition~\ref{prop:prop1} shows that $G$ is unitary.

    Next, if $|\tr(P)|<2$, the same trace conditions show that $\lambda_P\in S^1$ and $q_{11}=\ovl{q}_{22}$. Choose an $R\in S$; then the trace conditions now show that $r_{11}=\ovl{r}_{22}$ and $q_{21}r_{12}=\ovl{q_{12}r_{21}}$. If $q_{12}$ (resp., $q_{21}$) were to vanish, we would find that $r_{12}$ (resp., $r_{21}$) must vanish as well; but $R$ was generic, so this would violate our irreducibility hypothesis. Thus, $q_{12},q_{21}\neq 0$, and the set $\{P,Q,R\}$ preserves the Hermitian form~\eqref{eq:hermitianform} with $h_{21}=0$, $h_{11}=1$, and $h_{22}=-q_{12}/\ovl{q}_{21}\in\RR$. Since our conjugation of $G$ and the choice of Hermitian form $H$ depend only on $P$ and $Q$, unitarity follows.

    The final case is, whenever $P,Q\in S$ are non-commuting, we have $\tr(P),\tr(Q)=\pm 2$; without loss of generality, suppose $\tr(P)=\tr(Q)=2$. By hypothesis, $P,Q\neq I$, so we can bring $P$ to the form~\eqref{eq:jordan}. In this basis, we must have $q_{21}\neq 0$; otherwise, the conditions $\det(Q)=1$ and $\tr(Q)=2$ would imply $q_{11}=q_{22}=1$, which would violate the non-commuting assumption. 

    Now, $P$ also cannot commute with $PQ$, so we must have $\tr(PQ)=\pm 2$. As in Proposition~\ref{prop:general}, the non-commuting condition requires $\tr(PQ)=-2$, and thus $q_{21}=-4$; then $Q$ takes the following form:
    \[Q=\begin{pmatrix}
        \,1+2s\phantom{.} & s^2\\ -4\phantom{.} & 1-2s
    \end{pmatrix},\qquad s\in\CC.
    \]
    Now, $P$ and $Q$ uniquely preserve the Hermitian form~\eqref{eq:H_weird}. However, Proposition~\ref{prop:general} shows that, for any $R\in S$, the group generated by $\{P,Q,R\}$ is unitary. Thus, $R$ preserves the same Hermitian form, and $G$ is unitary.

    \paragraph*{Irreducible case, dimension condition.} Finally, we prove that, if $G$ is irreducible, then it is unitary if and only if $\dim(A)=4$. Suppose that $G$ is irreducible and unitary; by an appropriate conjugation, we can ensure either that $G\subset\SL(2,\RR)$ or that $G\subset\SU_2$, and so $\dim(A)\leq 4$. By the density theorem~\cite[Section~3.2]{etingof2011introduction}, we know that $G$ must generate the full matrix algebra $\CC^{2\times 2}$ over the complex numbers; since this algebra is of (complex) dimension 4, the real algebra $A$ must be of real dimension $\geq 4$. Alternatively, we can derive this result more directly. In the case that $G\subset\SL(2,\RR)$, the only way for $\dim(A)<4$ would be for all $P\in G$ to be upper-triangular (or lower-triangular), which would violate irreducibility. In the case that $G\subset\SU_2$, we fix non-commuting $P,Q\in G$ and conjugate by a \emph{unitary} matrix to ensure $P$ takes the form~\eqref{eq:eigenbasis} and that $G$ is still contained in $\SU_2$; that this is possible follows from the spectral theorem. The trace conditions $\tr(Q),\tr(PQ)\in\RR$ then imply that $q_{11}=\ovl{q}_{22}$, so we can subtract off the diagonal of both $Q$ and $PQ$ using an appropriate linear combination of $\{I,P\}$; but $\lambda_P\neq\pm 1$, so the algebra $A'$ generated by $\{I,P,Q\}$ must contain all matrices of the following form:
    \[Z=\begin{pmatrix}
        \phantom{-}z\phantom{.} & w\\-\ovl{w}\phantom{.}&\ovl{z}
    \end{pmatrix}.\]
    Since $A'\subset A$, this implies that $\dim(A)=4$. We note that, in this latter case, $A$ is isomorphic to the quaternion algebra.

    Conversely, suppose that $G$ is irreducible and that $\dim(A)=4$. If any $P\in G$ were to satisfy $\tr(P)\notin\RR$, then $P+P^{-1}=2\tr(P)I$ would be linearly independent from $I$ over the reals; but this would imply that the algebra $A'$ generated by $\{I,P\}$ already has $\dim(A')=4$, so $A'=A$. But then, any $Q\in G$ would have to be contained in this algebra, so, in particular, $G$ would have to be reducible. As such, we know that $\tr(P)\in\RR$ for all $P\in G$, which implies unitarity.
\end{proof}

\section{Unitary Monodromy of Heun and Darboux Operators} \label{sec:HeunAnalysis}
We have approached the problem~\eqref{qs:app} in the general case with Theorem~\ref{thm:verygeneral}, in the sense that, once the monodromy group $G\subset\GL(2,\CC)$ is known, Theorem~\ref{thm:verygeneral} provides a necessary and sufficient criterion for its unitarity. We now discuss how our result applies to the Heun~\eqref{eq:Heun} and Darboux~\eqref{eq:Darboux} operators specifically. In so doing, we will be able to make progress towards characterizing the \emph{point-spectrum} (i.e., set of eigenvalues) of Heun and Darboux operators.

For one, since the monodromy matrices of the Darboux equation are all of unit determinant, we can apply Theorem~\ref{thm:verygeneral} with $G'=G$:

\begin{corollary}\label{cor:general_darboux}
    Let $G$ be the monodromy group of the Darboux equation~\eqref{eq:Darboux}. If the natural representation of $G$ on $\CC^2$ is reducible, then $G$ is unitary if and only if it is conjugate to a subgroup of either $\SO(2)$ or the group $\RR\rtimes\RR^\times\subset\SL(2,\RR)$ of real upper-triangular matrices. 

    If the representation of $G$ on $\CC^2$ is irreducible, fix local monodromy matrices $P,Q,R\in G$. Then $G$ is unitary if and only if the following conditions are met:
    \begin{enumerate}
        \item For each $i\in\{0,1,2,3\}$, either $m_i\in\RR$ or $\Re(m_i)\in\frac{1}{2}\ZZ$.
        \item The traces $\tr(PQ)$, $\tr(QR)$, and $\tr(PR)$ are real, or equivalently, the matrices $\{P,Q,R\}$ generate a real algebra of dimension $4$.
    \end{enumerate}
\end{corollary}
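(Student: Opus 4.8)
The plan is to derive the corollary as a specialization of Theorem~\ref{thm:verygeneral}, exploiting the fact (noted just above the statement) that all four local monodromy matrices of~\eqref{eqn:Darboux} have unit determinant. Since this forces $G\subset\op{SL}(2,\Comp)$, I can take $G'=G$ and invoke the theorem with no rescaling. The reducible half of the corollary is then exactly the conclusion of Lemma~\ref{lem:reducible}, so essentially all of the work lies in translating part~(4) of Theorem~\ref{thm:verygeneral} into conditions~(1) and~(2) in the irreducible case.

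For necessity, I would assume $G$ unitary and irreducible and apply Lemma~\ref{lem:realtraces}: every element of $G$ has real trace. In particular each local monodromy $M_i$ does, and from the exponents~\eqref{eq:localexp_darboux} its eigenvalues are $e^{\mp 2\pi i m_i}$, so $\tr(M_i)=2\cos(2\pi m_i)$. Writing $m_i=a+bi$ gives $\op{Im}\cos(2\pi m_i)=-\sin(2\pi a)\sinh(2\pi b)$, which vanishes precisely when $b=0$ (i.e.\ $m_i\in\Real$) or $\sin(2\pi a)=0$ (i.e.\ $\op{Re}(m_i)\in\tfrac12\mathbb{Z}$); this is condition~(1). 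Reality of $\tr(PQ),\tr(QR),\tr(PR)$ is the first form of condition~(2), while the equivalent statement $\dim(A)=4$ is exactly the characterization of irreducible unitary groups supplied by part~(4) of the theorem. Note that, unlike in Proposition~\ref{prop:onlyreal}, the determinant argument is unavailable here, since every $\det M_i=1$; it is the reality of the traces, through Lemma~\ref{lem:realtraces}, that does the work.

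For sufficiency, I would check the trace hypotheses of part~(4) for the generating set $\{P,Q,R\}$. Three of the four single traces are $2\cos(2\pi m_i)$ and hence real by condition~(1); the pairwise traces $\tr(PQ),\tr(QR),\tr(PR)$ are real by condition~(2). It then remains to produce one real triple trace, and for this I would invoke the global monodromy relation to identify the fourth local monodromy $M_0$, up to conjugacy and inverse, with $PQR$, so that $\tr(PQR)=\tr(M_0)=2\cos(2\pi m_0)$ is real by the $i=0$ instance of condition~(1). With all hypotheses of part~(4) in hand, $G$ is unitary and $\dim(A)=4$, yielding the second form of condition~(2).

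The one genuinely non-formal step---and the step I expect to be the main obstacle---is this last identification. For the Heun equation on $\mathbb{P}^1$ the sphere relation $M_0M_1M_2M_3=1$ gives $\tr(PQR)=\tr(M_0)$ for free, but~\eqref{eqn:Darboux} lives on a four-times-punctured torus, whose fundamental group is free of rank five, so a priori $G$ need not be generated by three local monodromies at all. What rescues the three-generator picture is the hyperelliptic structure behind the substitution $\wp(z)=e_1+(e_2-e_1)x$: the involution $z\mapsto -z$ fixes the four half-periods and expresses the two cycle monodromies as words in the local ones, so that $G$ is generated by its local monodromy matrices with the fourth equal, up to conjugacy and inverse, to the product of the other three. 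I would make this rigorous by pulling the Heun monodromy back along the branched double cover and tracking the scalar prefactor $x^{-m_1/2}(x-1)^{-m_2/2}(x-a)^{-m_3/2}$---the very factor that renders $\det M_i=1$ and realigns the triple trace with $2\cos(2\pi m_0)$. Once this bookkeeping is complete, conditions~(1) and~(2) are precisely the hypotheses of Theorem~\ref{thm:verygeneral}(4), and the corollary follows.
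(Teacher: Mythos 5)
Your overall route is the same as the paper's (very terse) proof: take $G'=G$ since every local determinant is $1$, quote Lemma~\ref{lem:reducible} for the reducible half, and translate the single-trace conditions of Theorem~\ref{thm:verygeneral}(4) into condition~(1) via~\eqref{eq:localexp_darboux}; your necessity direction is correct. The genuine gap is exactly at the step you flagged as the main obstacle, and the covering-space claim you invoke to close it is false. Writing $N_i$ for the Darboux local monodromy at $\omega_i$ and $a,b$ for the two cycle loops of the torus, the based loops satisfy $[a,b]\,\delta_0\delta_1\delta_2\delta_3=1$; the elliptic involution does \emph{not} express $a,b$ as words in the $\delta_i$. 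This fails already in first homology: the classes of the $\delta_i$ span only a rank-$3$ sublattice of $H_1\simeq\mathbb{Z}^5$, and $a,b$ lie outside it. In the double-cover picture, the Darboux local monodromies are the \emph{squares} $\widetilde{M}_i^{\,2}$ of the prefactor-twisted Heun matrices, whereas the cycle monodromies are the \emph{pair products} $\widetilde{M}_i\widetilde{M}_j$, and a pair product is never a word in squares (map the free group onto $(\mathbb{Z}/2)^3$; squares die, $\gamma_i\gamma_j$ does not). For the same reason $PQR=N_1N_2N_3$ equals $N_0^{-1}\rho([b,a])$ up to conjugation, \emph{not} a conjugate of $N_0^{-1}$, so the identity $\tr(PQR)=2\cos(2\pi m_0)$ on which your sufficiency argument rests is unavailable.

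The failure is concrete: take Beukers' own case $m_1=m_2=m_3=0$, $m_0=-1/2$, where the only nontrivial local monodromy is $N_0$ (eigenvalues $-1,-1$), so every product of local monodromies has trace $\pm 2$ and conditions (1)--(2) hold for \emph{every} accessory parameter $B'$ (while the ``equivalently, $\dim=4$'' clause fails, since the local monodromies generate an abelian algebra). Meanwhile $G=\langle\rho(a),\rho(b)\rangle$ is irreducible for every $B'$---a common eigenvector would be fixed by $\rho([a,b])=N_0^{-1}$, which has no eigenvalue $1$---yet $G$ is unitary only for the discrete Lam\'e spectrum, and indeed $\tr(N_1N_2N_3)=2\neq-2=2\cos(2\pi m_0)$. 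So the local monodromies do not generate $G$, the triple trace is not controlled by $m_0$, and the sufficiency direction collapses. To be fair, the paper's one-sentence proof is silent on this very point---you correctly isolated the missing content---but closing it requires bringing the cycle monodromies into the argument (e.g.\ converting conditions on $\tr(N_iN_j)$ into conditions on the cycle traces $\tr(\widetilde{M}_i\widetilde{M}_j)$, essentially the quantities appearing in Corollary~\ref{cor:general_heun}, under a nondegeneracy hypothesis $m_i\notin\mathbb{Z}$ that excludes the example above), not the bookkeeping you propose.
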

\begin{proof}
    The reducible case follows directly from Theorem~\ref{thm:verygeneral}. For the irreducible case, one need only verify that the trace conditions $\tr(P),\tr(Q),\tr(R),\tr(PQR)\in\RR$ are equivalent to our stated conditions for the parameters $m_i$. Using the expressions~\eqref{eq:localexp_darboux} for the local exponents, we calculate
    \[\tr (P) = e^{-2\pi i m_1} + e^{2\pi i (1+m_1)} = 2\cos(2\pi m_1),\]
    which is real if and only if $m_1\in\RR$ or $\Re(m_1)\in\frac{1}{2}\ZZ$. The calculations for $Q$, $R$, and $PQR$ yield the same for $m_2$, $m_3$, and $m_0$, respectively.
\end{proof}

Before applying Theorem~\ref{thm:verygeneral} to the Heun equation~\eqref{eq:Heun}, we note that---except for particular choices of parameters---the representation of $G$ on $\CC^2$ is necessarily irreducible in this case. Indeed, suppose it is reducible, and suppose $u\in\CC^2$ is a shared eigenvector of all matrices $g\in G$. Fix local monodromy matrices $P$, $Q$, and $R$ about $x=0$, $1$, and $a$, respectively. On one hand, the expressions~\eqref{eq:localexp_heun} show that the eigenvalue of $PQR$ associated with $u$ must be either $e^{2\pi i\alpha}$ or $e^{2\pi i\beta}$. On the other hand, we know that the eigenvalue of $P$ associated with $u$ must be either $1$ or $e^{2\pi i\gamma}$, and similarly with $Q$ and $R$; multiplying these matrices together and recalling that $\gamma+\delta+\eps=1+\alpha+\beta$, we see that the representation can {only} be reducible if
\begin{equation}\label{eq:heun_reducible}
    \alpha, \beta\in\{0,\gamma,\delta,\varepsilon,\gamma+\delta,\gamma+\varepsilon,\delta+\varepsilon,\gamma+\delta+\varepsilon\}\pmod{1}.
\end{equation}
In particular, this criterion ensures that the representation is irreducible in the Lam\'e case~\eqref{eq:lameAlgebraic} with $A=1/4$, where $\gamma=\delta=\eps=1/2$ and $\alpha=\beta=1/4$; in this case, the above argument reduces to one of Beukers~\cite{beukers2007unitary}. It similarly ensures irreducibility if $\alpha,\beta\notin\RR$. In any case, we can apply Theorem~\ref{thm:verygeneral} once again:

\begin{corollary}\label{cor:general_heun}
    Let $G$ be the monodromy group of the Heun equation~\eqref{eq:Heun}, and suppose~\eqref{eq:heun_reducible} is not satisfied. Fix local monodromy matrices $P,Q,R\in G$ and  define $P_0=e^{\pi i\gamma}P$, $Q_0=e^{\pi i\delta}Q$, and $R_0=e^{\pi i\varepsilon}R$. Then $G$ is unitary if and only if the following conditions are met:
    \begin{enumerate}
        \item The parameters $\gamma$, $\delta$, and $\varepsilon$ are real, and either $\alpha,\beta\in\RR$ or $\alpha-\ovl{\beta}\in\ZZ$.
        \item The traces $\tr(P_0Q_0)$, $\tr(Q_0R_0)$, and $\tr(P_0R_0)$ are real, or equivalently, the matrices $P_0$, $Q_0$, and $R_0$ generate a real algebra of dimension 4.
    \end{enumerate}
\end{corollary}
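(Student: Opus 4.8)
The plan is to deduce everything from Theorem~\ref{thm:verygeneral}, so the first task is to match the rescaled generators $P_0,Q_0,R_0$ to the rescaled group $G'$ of that theorem. From the local exponents~\eqref{eq:localexp_heun}, the monodromy matrix $P$ about $z=0$ has eigenvalues $1$ and $e^{-2\pi i\gamma}$, so $\det P = e^{-2\pi i\gamma}$ and $e^{-\pi i\gamma}$ is a square root of $\det P$; hence $P_0 = e^{\pi i\gamma}P = P/\sqrt{\det P}$ lies in $\op{SL}(2,\Comp)$, and likewise for $Q_0,R_0$. Because rescaling by scalars is multiplicative up to sign, $\{P_0,Q_0,R_0\}$ generates $G'$ (together with $-\mathbbm{1}$), and since scalars preserve invariant subspaces, $G'$ is irreducible exactly when $G$ is---which holds here because~\eqref{eq:heun_reducible} fails. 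We are thus in case~(4) of Theorem~\ref{thm:verygeneral}: $G$ is unitary if and only if $|\det g|=1$ for all $g\in G$ and the trace conditions hold on $\{P_0,Q_0,R_0\}$, namely that $\tr(P_0),\tr(Q_0),\tr(R_0)$, the three pairwise products, and one cyclic ordering of the triple product are all real (equivalently $\dim(A)=4$).

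Next I would translate the determinant condition. Since $g\mapsto|\det g|$ is a homomorphism onto a subgroup of $\Real_{>0}$ generated by $|\det P|,|\det Q|,|\det R|$, we have $|\det g|=1$ for all $g$ if and only if $|\det P|=|\det Q|=|\det R|=1$, i.e.\ if and only if $\gamma,\delta,\varepsilon\in\Real$ (using $|\det P|=e^{2\pi\Imag(\gamma)}$). This accounts for the first half of condition~(1), and via $\gamma+\delta+\varepsilon=1+\alpha+\beta$ it forces $\alpha+\beta\in\Real$.

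I would then compute the rescaled traces directly from the eigenvalues. Assuming $\gamma\in\Real$, one finds $\tr(P_0)=e^{\pi i\gamma}(1+e^{-2\pi i\gamma})=2\cos(\pi\gamma)\in\Real$, and similarly $\tr(Q_0),\tr(R_0)\in\Real$, so these three conditions are automatic under condition~(1). The crucial computation is the triple product: taking the ordering whose loop encircles $z=\infty$, the corresponding product is (up to conjugation) the inverse of the monodromy at infinity and so has eigenvalues $e^{-2\pi i\alpha},e^{-2\pi i\beta}$; using the parameter relation one obtains $\tr(P_0Q_0R_0)=\pm2\cos(\pi(\alpha-\beta))$. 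The reality of this quantity, combined with $\alpha+\beta\in\Real$, is then equivalent to the stated alternative ``$\alpha,\beta\in\Real$ or $\alpha-\overline{\beta}\in\mathbb{Z}$''---exactly the content (and the necessity half) already recorded in Proposition~\ref{prop:onlyreal}. The remaining three trace conditions, $\tr(P_0Q_0),\tr(Q_0R_0),\tr(P_0R_0)\in\Real$, are precisely condition~(2), and the ``$\dim(A)=4$'' reformulation is the equivalence supplied by Theorem~\ref{thm:verygeneral}. Assembling these, condition~(1) is equivalent to the determinant hypothesis together with the single- and triple-trace conditions, condition~(2) to the pairwise ones, and hence the two together are equivalent to unitarity.

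The main obstacle I anticipate is the triple-trace computation: one must correctly identify which cyclic ordering of $P,Q,R$ represents the monodromy around $z=\infty$, so that its eigenvalues are the exponentials of the exponents $\alpha,\beta$, and then carry out the elementary but slightly delicate complex-number argument showing that ``$2\cos(\pi(\alpha-\beta))\in\Real$ together with $\alpha+\beta\in\Real$'' collapses to ``$\alpha,\beta\in\Real$ or $\alpha-\overline{\beta}\in\mathbb{Z}$''. Writing $\alpha-\beta=x+iy$ and expanding $\cos(\pi(x+iy))$ shows that reality forces $\sin(\pi x)\sinh(\pi y)=0$, i.e.\ $y=0$ or $x\in\mathbb{Z}$; feeding back $\alpha+\beta\in\Real$ then yields the two stated cases. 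The fact that Theorem~\ref{thm:verygeneral} permits either ordering of the triple product is what keeps this clean, since we need only verify that one ordering corresponds to the puncture at infinity.
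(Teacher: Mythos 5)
Your proposal is correct and takes essentially the same route as the paper, which states this corollary without a separate proof as a direct application of Theorem~\ref{thm:verygeneral}: rescale the local monodromy matrices into $\op{SL}(2,\Comp)$ via the local exponents~\eqref{eq:localexp_heun}, note irreducibility from the failure of~\eqref{eq:heun_reducible}, translate the condition $|\det g|=1$ into $\gamma,\delta,\varepsilon\in\Real$, and convert the triple-trace condition into the restriction on $\alpha,\beta$ exactly as in the proof of Proposition~\ref{prop:onlyreal}. Your computation $\tr(P_0Q_0R_0)=\pm 2\cos(\pi(\alpha-\beta))$ and the reduction of its reality (given $\alpha+\beta\in\Real$) to ``$\alpha,\beta\in\Real$ or $\alpha-\ovl{\beta}\in\mathbb{Z}$'' is precisely the intended argument, so the two approaches coincide.
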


It is worth noting that the setting of Corollary~\ref{cor:general_darboux} is somewhat more general than that of Corollary~\ref{cor:general_heun}. In particular, the Heun parameters $\gamma,\delta,\varepsilon\in\RR$ correspond to Darboux parameters $m_1,m_2,m_3\in\RR$, as we can see from the transformations~\eqref{eq:params_D2H}, and the restrictions on $\alpha$ and $\beta$ correspond to the parameter space $m_0\in\RR\cup\left(\frac{1}{2}+\ZZ + i\RR\right)$. By contrast, our result on the Darboux equation applies to the larger parameter space $m_i\in\RR\cup\left(\frac{1}{2}\ZZ + i\RR\right)$.

To make sense of this, recall that the variable $z$ of~\eqref{eq:Darboux} and the variable $x$ of~\eqref{eq:Heun} are related as $x\sim z^2$ about every pole. If $G_H$ is the monodromy group of a fixed Heun equation and $G_D$ is that of the corresponding Darboux equation, then so long as both act irreducibly on $\CC^2$, we see that the resulting map $G_D\hookrightarrow G_H$ is a \emph{strict} inclusion; a simple closed loop in $G_D$ corresponds to {two} loops in $G_H$. As a result, if $G_H$ is unitary, we expect the same of $G_D$; if only $G_D$ is unitary, however, we cannot generally expect the same of $G_H$. 

We can make more progress on this question by investigating the point-spectrum of Heun and Darboux operators. To this end, we transcribe Proposition~\ref{prop:prop3} below from~\cite[Prop.~4]{beukers2007unitary}: 
\begin{proposition}
\label{prop:prop3}
Let $G$ be the monodromy group of the linear second order differential equation $y'' + py' + qy = 0$, where $p,q \in \CC(z)$. Then $G$ is unitary if and only if there exists a nonzero $C^2$ function $f$ on $\CC\setminus \{0,1,a\}$ which is a real-analytic solution of the equation. Furthermore, if the natural representation of $G$ on $\CC^2$ is irreducible, then $f$ is uniquely determined up to a constant factor. \end{proposition}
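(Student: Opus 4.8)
\section*{Proof proposal}

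The plan is to set up an explicit correspondence between the $G$-invariant Hermitian forms on $\Comp^2$ and the single-valued real-analytic solutions of the equation, and then read off both directions of the equivalence and the uniqueness clause from it. Fix a fundamental system $y_1,y_2$ of holomorphic solutions near the base point, assembled into the column $Y=(y_1,y_2)^T$, so that continuation around a loop $\gamma$ acts by $Y\mapsto M_\gamma Y$ with $M_\gamma\in G$. The key observation is that, since each $y_k$ is holomorphic, each $\ovl{y_j}$ is killed by the Wirtinger derivative $\partial_z=\tfrac12(\partial_x-i\,\partial_y)$. Hence, for any Hermitian matrix $H=(H_{jk})$, the function
\[
f_H=Y^\dagger H Y=\sum_{j,k}\ovl{y_j}\,H_{jk}\,y_k
\]
is real-valued (because $H$ is Hermitian) and, writing $'=\partial_z$, satisfies $\partial_z^2 f_H+p\,\partial_z f_H+q\,f_H=\sum_{j,k}\ovl{y_j}\,H_{jk}\,(y_k''+p y_k'+q y_k)=0$. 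This is exactly the sense in which $f_H$ is a \emph{real-analytic solution} of the equation. For the forward direction, suppose $G$ is unitary; by Definition~\ref{def:unitary} there is a nondegenerate Hermitian $H$ with $M^\dagger H M=H$ for all $M\in G$. Then continuation around any $\gamma$ sends $f_H\mapsto(M_\gamma Y)^\dagger H(M_\gamma Y)=Y^\dagger(M_\gamma^\dagger H M_\gamma)Y=f_H$, so $f_H$ is single-valued on $\Comp\setminus\{0,1,a\}$; it is $C^2$ (indeed real-analytic), and nonzero because $H\neq 0$ and the products $\ovl{y_j}y_k$ are linearly independent. This exhibits the desired $f=f_H$.

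For the converse, suppose $f\neq 0$ is a single-valued real-analytic solution. Treating $\ovl{z}$ as a parameter, $f(z,\ovl z)$ solves the holomorphic ODE in $z$, so locally $f=c(\ovl z)Y(z)$ for a row vector of antiholomorphic coefficients $c_j(\ovl z)$. Imposing that $f$ be real and single-valued on the punctured plane forces the $c_j$ to be \emph{constant-coefficient} antiholomorphic combinations of the $\ovl{y_k}$, so that $f=f_H$ for a Hermitian matrix $H$; single-valuedness of $f$ then reads precisely as $M_\gamma^\dagger H M_\gamma=H$ for each generator, whence $G$ preserves $H$. It remains to take $H$ nondegenerate. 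From $M^\dagger H M=H$ we get $HM^{-1}=M^\dagger H$, so $\ker H$ is stable under $G$. When $G$ is irreducible this forces $\ker H=0$, so $H$ is automatically nondegenerate and $G$ is unitary; when $G$ is reducible and $H$ happens to be degenerate, I would invoke Lemma~\ref{lem:reducible}, since a nonzero invariant Hermitian form constrains $G$ (up to conjugation) into one of the two model groups, each of which preserves a genuinely nondegenerate form.

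For the uniqueness clause, the map $H\mapsto f_H$ is injective by the independence of the $\ovl{y_j}y_k$, so uniqueness of $f$ up to a constant factor is equivalent to one-dimensionality of the space of $G$-invariant Hermitian forms. For irreducible $G$ the density theorem (as used in the proof of Theorem~\ref{thm:verygeneral}) shows that $G'$ spans $\Comp^{2\times 2}$, so by Schur's lemma the space of invariant sesquilinear forms is one complex dimension; intersecting with the real subspace of Hermitian forms leaves a single real line, and thus $f$ is determined up to a real scalar.

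The main obstacle is the structural step in the converse: showing that every single-valued real-analytic solution is of the bilinear shape $f_H$, i.e.\ that realness together with single-valuedness forces the antiholomorphic coefficients $c_j(\ovl z)$ to be constant-coefficient combinations of the $\ovl{y_k}$ rather than arbitrary antiholomorphic functions. This requires careful monodromy bookkeeping, matching the transformation $c\mapsto c\,M_\gamma^{-1}$ of the coefficient row against the conjugate transformation of $\ovl{Y}$. The secondary subtlety is upgrading a possibly-degenerate invariant form to a nondegenerate one in the reducible case, which I expect to dispatch entirely with Lemma~\ref{lem:reducible}.
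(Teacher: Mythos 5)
First, a point of comparison: the paper contains no proof of Proposition~\ref{prop:prop3} at all --- it is transcribed from Beukers~\cite[Prop.~4]{beukers2007unitary}, with only a remark that Beukers' argument extends to analytic coefficients. So your proposal can only be measured against the standard (Beukers-style) argument. Your forward direction and your uniqueness argument are correct and are exactly that standard route: $f_H=Y^\dagger HY$ is annihilated by $L$ because $\partial_z$ kills each $\ovl{y_j}$, it is single-valued precisely when $H$ is $G$-invariant, it is nonzero by linear independence of the products $\ovl{y_j}y_k$, and Schur's lemma reduces the invariant Hermitian forms to a real line when $G$ is irreducible.

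The genuine gap is in the converse, at the point you dismiss as a ``secondary subtlety.'' Your plan for the reducible case is to invoke Lemma~\ref{lem:reducible} to upgrade a possibly degenerate invariant form to a nondegenerate one, but that lemma's hypothesis \emph{is} unitarity: it classifies reducible groups preserving a nondegenerate form, and says nothing about groups preserving only a degenerate one. Moreover, no argument can close the case as you have framed it, because the implication ``nonzero invariant Hermitian form $\Rightarrow$ unitary'' is false for reducible groups. Concretely, take $y''+(i/z)\,y'=0$, with solutions $y_1\equiv 1$ and $y_2=z^{1-i}$, whose monodromy group is $\langle\operatorname{diag}(1,e^{2\pi})\rangle$. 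Then $f\equiv 1=|y_1|^2$ is a nonzero, real-valued, single-valued, real-analytic solution --- equivalently, the degenerate form $\operatorname{diag}(1,0)$ is invariant --- yet $G$ preserves no nondegenerate form whatsoever, by the determinant argument of Proposition~\ref{prop:onlyreal} (here $|\det|=e^{2\pi}\neq 1$). Likewise, the paper's own example following Lemma~\ref{lem:realtraces} (unipotent $P,Q$ with $z_p\ovl{z}_q\notin\Real$) preserves the degenerate form $\operatorname{diag}(0,1)$ but is not unitary. So for irreducible $G$ your kernel-invariance observation does close the converse, but the reducible case cannot be ``dispatched entirely with Lemma~\ref{lem:reducible}''; it is exactly where the content (and the precise reading of Beukers' phrase ``real-analytic solution'') lives.

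A lesser issue: the step you call the main obstacle is real, but the mechanism you propose for it --- monodromy bookkeeping matching $c\mapsto cM_\gamma^{-1}$ against $\ovl{Y}$ --- is not what proves it. What forces the antiholomorphic coefficients into the constant span of $\ovl{y_1},\ovl{y_2}$ is \emph{real-valuedness}, used locally: from $f=\ovl{f}$ write $f=\sum_j h_j(z)\,\ovl{y_j(z)}$ with $h_j(z)=\ovl{c_j(\ovl{z})}$ holomorphic; applying $L$ and using that $\ovl{y_1},\ovl{y_2}$ are linearly independent over the ring of holomorphic functions gives $Lh_j=0$, hence $h_j\in\op{span}_\Comp\{y_1,y_2\}$ and $f=Y^\dagger HY$ for a constant matrix $H$, which realness (again via independence of the $\ovl{y_j}y_k$) forces to be Hermitian. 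Monodromy enters only afterwards, converting single-valuedness of $f$ into $M_\gamma^\dagger HM_\gamma=H$. In particular, real-valuedness of $f$ must be built into the hypothesis ``real-analytic solution''; if complex-valued solutions were admitted, the statement would fail even more cheaply (e.g.\ $f=z^a\ovl{z}^{\,a}=e^{2a\log|z|}$ is a single-valued real-analytic solution of the Euler equation $z^2y''+zy'-a^2y=0$ for any $a$, including values for which the monodromy $\operatorname{diag}(e^{2\pi ia},e^{-2\pi ia})$ has non-real trace and is not unitary).
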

\begin{remark*}
    Although stated in terms of rational coefficients $p$ and $q$, Beukers' proof holds for the more general case where $p$ and $q$ are simply meromorphic. In particular, it applies equally well to the Darboux equation~\eqref{eq:Darboux}.
\end{remark*}

With this result in hand, we see that Theorem~\ref{thm:verygeneral} provides a criterion for any $\lambda\in\CC$ to belong to the point-spectrum of any second-order operator, based on the monodromy group corresponding to the equation $y'' + py' + qy = \lambda y$. We emphasize again that computing the monodromy group is typically a highly transcendental step; even still, we will be able to recover practical numerical results from this line of reasoning in the following section. More immediately, Proposition~\ref{prop:prop3} allows us to connect the point-spectra of Heun and Darboux operators by slightly modifying the coordinate transform of Section~\ref{sec:Background_eqs}.

As stated, the change of variables does not take one real-analytic function to another, as it maps
\[y(x)\mapsto x^{-m_1/2}(x-1)^{-m_2/2}(x-a)^{-m_3/2}y(x) =: u\]
for real values $m_1$, $m_2$, and $m_3$; however, we can correct this behavior by instead mapping
\begin{equation}\label{eq:corrected_variable}
    y(x) \mapsto |x|^{-m_1}|x-1|^{-m_2}|x-a|^{-m_3}y(x),
\end{equation}
which differs from the above by only an antiholomorphic factor (and thus continues to satisfy the Darboux equation). As such, Proposition~\ref{prop:prop3} yields the following corollary:
\begin{corollary}\label{cor:RDarbouxSpec}
    If a Heun equation~\eqref{eq:Heun} carries a unitary monodromy group with irreducible action on $\CC^2$, then the corresponding Darboux equation~\eqref{eq:Darboux}---resulting from the transform outlined in Section~\ref{sec:Background_eqs}---also carries a unitary monodromy group.
\end{corollary}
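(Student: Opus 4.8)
The plan is to combine Beukers' characterization of unitarity in terms of real-analytic solutions (Proposition~\ref{prop:prop3}) with the corrected change of variables~\eqref{eq:corrected_variable}, using Proposition~\ref{prop:prop3} once in each direction. Since by hypothesis the Heun equation~\eqref{eqn:Heun} has unitary monodromy acting irreducibly on $\Comp^2$, Proposition~\ref{prop:prop3} supplies a nonzero, real-analytic (hence $C^2$) solution $y(x)$ of~\eqref{eqn:Heun} on $\Comp\setminus\{0,1,a\}$, determined up to a constant. The goal is then to transport $y$ to an analogous solution of the Darboux equation and re-apply the proposition.

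Second, I would define the transported function via~\eqref{eq:corrected_variable}, setting
\[
u(z) = |x|^{-m_1}|x-1|^{-m_2}|x-a|^{-m_3}\,y(x),\qquad x=x(z),
\]
where $x(z)$ is the holomorphic coordinate fixed by $\wp(z)=e_1+(e_2-e_1)x$. The key algebraic observation is that, because $m_1,m_2,m_3\in\Real$, each absolute-value factor splits as $|x|^{-m_1}=x^{-m_1/2}\,\ovl{x}^{-m_1/2}$ (and similarly for the other two), so that $u$ equals the \emph{standard} holomorphic change of variables $x^{-m_1/2}(x-1)^{-m_2/2}(x-a)^{-m_3/2}\,y$ multiplied by the antiholomorphic factor $\ovl{x}^{-m_1/2}(\ovl{x}-1)^{-m_2/2}(\ovl{x}-\ovl{a})^{-m_3/2}$. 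The holomorphic part of this product solves the Darboux equation~\eqref{eqn:Darboux} by the classical computation of Section~\ref{sec:Background_eqns}, and multiplication by an antiholomorphic function $g$ preserves solutions: since $\tfrac{d}{dz}g\equiv 0$, we have $\tfrac{d^2}{dz^2}(u_0 g)=g\,\tfrac{d^2}{dz^2}u_0$, leaving~\eqref{eqn:Darboux} intact. Hence $u$ is a solution of the Darboux equation.

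Third, I would verify that $u$ has the regularity required to re-invoke Proposition~\ref{prop:prop3}. The prefactors $|x|^{-m_1}|x-1|^{-m_2}|x-a|^{-m_3}$ are real-analytic and nonvanishing on $\Comp\setminus\{0,1,a\}$, the factor $y$ is real-analytic and nonzero by construction, and $x(z)$ is holomorphic (hence real-analytic) away from the lattice points corresponding to the $\omega_i$. Composing and multiplying, $u$ is a nonzero, real-analytic ($C^2$) function on the corresponding punctured domain in $z$. By the Remark following Proposition~\ref{prop:prop3}, that proposition applies verbatim to the Darboux equation~\eqref{eqn:Darboux}, so the existence of such a $u$ forces the Darboux monodromy group to be unitary, which is the claim.

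The main obstacle I anticipate is the regularity bookkeeping near the singular points rather than the algebra. One must confirm that the corrected prefactor---which is only real-analytic and carries prescribed absolute-value growth---does not violate the $C^2$ hypothesis of Proposition~\ref{prop:prop3} on the punctured surface, and that the holomorphic/antiholomorphic splitting is handled correctly: the individual factors $x^{-m_1/2}$ and $\ovl{x}^{-m_1/2}$ are multivalued, so one should stress that only their single-valued product $|x|^{-m_1}$ need be globally defined, and carry out the antiholomorphic-invariance argument on simply connected patches, where a consistent holomorphic branch exists. Everything else reduces to a direct, two-step application of Proposition~\ref{prop:prop3} together with the coordinate change already established.
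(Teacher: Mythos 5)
Your proof is correct and takes essentially the same route as the paper: the paper likewise applies Proposition~\ref{prop:prop3} in both directions, transporting the real-analytic Heun solution to the Darboux equation via the corrected map~\eqref{eq:corrected_variable} and observing that it differs from the holomorphic change of variables only by an antiholomorphic factor, which leaves the equation intact. Your extra bookkeeping (branch choices on simply connected patches, and $m_1,m_2,m_3\in\Real$, which follows from Proposition~\ref{prop:onlyreal}) only makes explicit what the paper leaves implicit.
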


The converse does \emph{not} hold, as discussed above: not every real-analytic solution of the Darboux equation corresponds to a real-analytic solution for the Heun equation. In the change of variables taken to reach the Darboux equation, note that $z$ and $-z$ have the same pre-image $x = (\wp(z)-e_1)/(e_2-e_1)$. In moving from the Heun to the Darboux equation, then, we introduce an additional requirement that the solution is \emph{even}---in general, solutions of the Heun equation~\eqref{eq:Heun} correspond to even solutions of the Darboux equation~\eqref{eq:Darboux}.

\section{Estimating the Eigenvalues of Heun and Darboux Operators} \label{sec:asymp}
Following Beukers~\cite{beukers2007unitary}, we provide an asymptotic estimate for the eigenvalues of the Heun equation, and we compare this estimate to a numerical approximation of the same eigenvalues. In so doing, we argue that Beukers' derived estimate should extend to any second-order equation with at most four regular singularities and provide evidence that our Theorem~\ref{thm:verygeneral} can be strengthened to require fewer trace conditions.

As a starting point, we convert the Heun equation~\eqref{eq:Heun} into the Darboux equation~\eqref{eq:Darboux} through the change of variable defined in Section~\ref{sec:Background_eqs}; this procedure yields a lattice $\Lambda\subset\CC$ with half-periods $\omega_1$, $\omega_2$, and $\omega_3 = \omega_1 + \omega_2$, and parameters $m_0\in\RR\cup(\frac{1}{2} +\ZZ + i\RR)$ and $m_1,m_2,m_3\in\RR$. Notably, these parameters satisfy the first criterion of Corollary~\ref{cor:general_darboux}, so we might expect to find unitary monodromy---or equivalently, single-valued real-analytic solutions (Proposition~\ref{prop:prop3}).

In this setting, we extend and revise Beukers' conjecture on the distribution of eigenvalues $B$ of the Heun operator. Importantly, recall from Section~\ref{sec:Background_eqs} that these values are distinct from the eigenvalues $B'$ of the Darboux operator; if such a $B$ exists, then a corresponding $B'$ exists (Corollary~\ref{cor:RDarbouxSpec}) and is given by $B' = (\wp(\omega_3) - \wp(\omega_1))B + B_0$, where a full expression for $B_0 = B_0(\alpha,\beta,\gamma,\delta,\eps)$ is given in \cite{2001math.....12179T}. These values $B'$ correspond to single-valued, real-analytic, \emph{even} solutions of the Darboux equation.

Beukers' conjecture \cite[Conj.~1]{beukers2007unitary} is transcribed below, with slightly modified notation:

\begin{conjecture}[Beukers, 2007]\label{conj:beukers}
    Let $\ovl{\Lambda}$ be the lattice generated by $2\ovl{\omega}_1$ and $2\ovl{\omega}_2$, and let 
    \[\Delta = 4\operatorname{Im}(\ovl{\omega}_1\omega_2) = 2i(\omega_1\ovl{\omega}_2 - \omega_2\ovl{\omega}_1)\]
    be the area of its fundamental parallelogram. Furthermore, let 
\[\zeta(z) = \frac{1}{z} + \sum_{w\in\Lambda\setminus\{0\}}\left(\frac{1}{z-w} + \frac{1}{w} + \frac{z}{w^2}\right)\]
be the Weierstrass zeta function~\cite{cartan1995elementary} on $\Lambda$, and let $\eta_i = \zeta(z + 2\omega_i) - \zeta(z)$ be its quasi-periods. Then the accessory parameters $B'$ such that the Lam\'e equation \eqref{eq:Darboux_Lame} carries a single-valued, real-analytic, even solution are as follows:
    \[B' = \ell_0^2- \frac{2}{i\Delta}m_0 (m_0+1)\left(\eta_1\overline{\omega_2}-\eta_2\overline{\omega_1} + \pi i\frac{\ell_0}{\overline{\ell_0}}\right) + \mathcal{O}(1/|\ell_0|),\]
    where $\ell_0\in\frac{2\pi}{\Delta}\ovl{\Lambda}$.
\end{conjecture}
\begin{remark*}
    Beukers' estimate is a special case of the more general WKB approximation~\cite{hall2013quantum,simon2015comprehensive}; notably, a similar approach was applied to a generalization of our setting by Gaiotto et al.~\cite{gaiotto2011wallcrossinghitchinsystemswkb}.
\end{remark*}

We split the modified conjecture into two distinct claims. First, we note that the argument underlying \cite[Conj.~1]{beukers2007unitary} can be straightforwardly adapted to support the following conjecture:
\begin{conjecture}\label{prop:asymptoticlattice}
Let $\ovl{\Lambda}$, $\Delta$, $\zeta(z)$, and $\eta_i$ be defined as in Conjecture~\ref{conj:beukers}. Let $\ell_0 \in \frac{2\pi}{\Delta}\overline{\Lambda}$, and consider the limit $\ell_0\to\infty$.
Fix an accessory parameter $B'$ such that the Darboux equation~\eqref{eq:Darboux} carries a single-valued, real-analytic, even solution. If $B'$ satisfies $\sqrt{B'} = \ell_0 + \eps$ for some $\eps\in (-\pi/\Delta,\pi/\Delta]\times (-\pi/\Delta,\pi/\Delta]$, then it must be given by
    \[B' = \ell_0^2- \frac{2}{i\Delta}\left(\sum_{i=0}^3 m_i (m_i+1)\right)\left(\eta_1\overline{\omega_2}-\eta_2\overline{\omega_1} + \pi i\frac{\ell_0}{\overline{\ell_0}}\right) + \mathcal{O}(1/|\ell_0|).\]
\end{conjecture}
The claim of this first conjecture is somewhat weaker than that of Conjecture~\ref{conj:beukers}. Namely, it does not make any claim that each lattice point in $\ovl{\Lambda}$ must be associated to a unique eigenvalue of the Heun operator; rather, it only claims that, if we know \emph{a priori} that a Heun eigenvalue lies near a given lattice point, then it should be offset by the predicted asymptotic formula.

In general, this is all one should expect from the sort of asymptotic argument used to justify Conjectures~\ref{conj:beukers} and~\ref{prop:asymptoticlattice}; the WKB approximation is not generally expected to converge to a precise answer, but it may provide an estimate of possible error for nearby solutions. One might then expect that, depending on the Heun equation, certain lattice points may have two or more associated eigenvalues, or none at all. On the other hand, by providing a rigorous proof of Conjecture~\ref{conj:beukers} in the particular case $m_0=1$, Beukers also suggests that the following, stronger claim may be true:
\begin{conjecture}\label{conj:conj2}
    There is a value $R>0$, possibly depending on the parameters $(m_0,m_1,m_2,m_3)$, such that for any $\ell_0 \in \frac{2\pi}{\Delta}\overline{\Lambda}$ with $|\ell_0|>R$, a unique accessory parameter $B'$ exists such that $|B'-\ell_0^2|\leq \ell_0$ and such that the Darboux equation carries a single-valued, real-analytic, even solution. Moreover, $B'$ satisfies the asymptotic estimate given by Conjecture~\ref{prop:asymptoticlattice}.
\end{conjecture}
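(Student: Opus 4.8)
The plan is to recast Conjecture~\ref{conj:conj2} as an existence-and-uniqueness statement for the zeros of a single scalar function of the spectral parameter $B'$, and then to attack it with a rigorous WKB analysis coupled to Rouch\'e's theorem. The weaker Conjecture~\ref{prop:asymptoticlattice} already supplies the heuristic asymptotic location of any eigenvalue near a lattice point; the additional content here is that near each $\ell_0^2$ with $|\ell_0|$ large there is \emph{exactly one} admissible $B'$ in the disk $|B'-\ell_0^2|\leq\ell_0$. I would therefore first isolate the correct real-analytic ``quantization function'' and then control it uniformly as $B'\to\infty$.

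First I would encode unitarity as a zero set. By Proposition~\ref{prop:prop3}, a single-valued real-analytic solution of the Darboux equation~\eqref{eqn:Darboux} exists precisely when its monodromy group is unitary, and by Corollary~\ref{cor:general_darboux} (whose parameter hypothesis is satisfied throughout Section~\ref{sec:asymp}) this is equivalent, in the irreducible case, to the reality of the three traces $\tr(PQ),\tr(QR),\tr(PR)$. Since the monodromy matrices $P,Q,R$ depend holomorphically on $B'$, these traces are entire functions of $B'$, and after accounting for the Fricke relations among them the unitarity locus is cut out by a single real-analytic equation $F(B')=0$. I would take $F$ to be a normalization of the imaginary part of an appropriate trace, chosen so that its zero set is exactly the set of admissible accessory parameters.

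Next I would make the WKB analysis behind Conjecture~\ref{prop:asymptoticlattice} rigorous, with explicit error control uniform on the boundary circle $|B'-\ell_0^2|=\ell_0$. Writing $k=\sqrt{B'}$ and treating $1/k$ as the semiclassical parameter, the solutions of~\eqref{eqn:Darboux} behave like $\exp(\pm i\int^z \sqrt{B'+\sum_i m_i(m_i+1)\wp(\zeta-\omega_i)}\,d\zeta)$, so the monodromy around each period $2\omega_j$ takes the form $\exp(\pm i k\,2\omega_j+\mathcal{O}(1))$ and $F(B')$ acquires, to leading order, the shape of a trigonometric function whose zeros accumulate on $\frac{2\pi}{\Delta}\overline{\Lambda}$. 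On the circle $|B'-\ell_0^2|=\ell_0$ the leading term should dominate the $\mathcal{O}(1/|\ell_0|)$ corrections, so Rouch\'e's theorem would yield exactly one zero $B'$ inside; feeding this back into the next order of the expansion and solving perturbatively recovers the refined estimate of Conjecture~\ref{prop:asymptoticlattice}, completing the argument.

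The hard part will be the rigorous WKB step, and in two respects. The potential $\sum_i m_i(m_i+1)\wp(z-\omega_i)$ has second-order poles at the four singular points, where the naive WKB ansatz breaks down; one must splice the global WKB solution to the local Frobenius solutions with exponents $(-m_i,1+m_i)$ through connection formulas and show the resulting corrections remain $\mathcal{O}(1/|\ell_0|)$ uniformly. Secondly, the uniqueness claim is delicate because adjacent lattice points $\ell_0$ produce accessory parameters spaced only $\mathcal{O}(|\ell_0|)$ apart in $B'$, comparable to the radius of the disk itself; verifying that the Rouch\'e count is exactly one rather than zero or two requires a sharp constant in the error bound and a careful choice of contour, and this is where I expect the real analytic difficulty, together with the dependence of $R$ on the parameters $m_i$, to concentrate.
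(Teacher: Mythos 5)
First, a point of framing: the statement you are trying to prove is presented in the paper as a \emph{conjecture}, and the paper offers no proof of it. Its support there consists of (i) the heuristic WKB argument adapted from Beukers that underlies Conjecture~\ref{prop:asymptoticlattice}, (ii) the fact that Beukers proved the analogous statement rigorously in the special Lam\'e case (Proposition~\ref{prop:beukers} and \cite[Conj.~1]{beukers2007unitary}), and (iii) numerical evidence from a Newton-type iteration on the trace conditions of Corollary~\ref{cor:general_heun}. Your proposal is likewise not a proof but a research program: you yourself defer the two essential steps (uniform WKB error control across the second-order poles of the potential, and the sharp constant needed for the counting argument) to ``the hard part.'' So it cannot be assessed as a correct proof; at best it is a plausible plan, broadly in the spirit of how Beukers handled the Lam\'e case.

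Beyond that, there is a concrete structural flaw in the plan as written. You propose to cut out the unitarity locus by a \emph{single} scalar real-analytic equation $F(B')=0$, with $F$ ``a normalization of the imaginary part of an appropriate trace,'' and then to count zeros with Rouch\'e's theorem. This cannot work as stated: the traces are holomorphic in $B'$, but $\Imag(\tr(\cdot))$ is harmonic, not holomorphic, so its zero set is generically a one-real-dimensional curve in the $B'$-plane, not a discrete set, and Rouch\'e's theorem does not apply to it. The unitarity locus is the \emph{intersection} of two such conditions (e.g.\ $\Imag(\tr(P_0Q_0))=\Imag(\tr(Q_0R_0))=0$, as in~\eqref{eq:rootfinding}), i.e.\ a system of two real equations in the two real parameters of $B'$; counting its solutions requires a genuinely two-dimensional argument (degree theory for the map $B'\mapsto(\Imag t_{PQ},\Imag t_{QR})$, or a quantitative contraction/implicit-function argument, which is closer to what Beukers actually does). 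Moreover, your appeal to ``the Fricke relations'' to collapse the three trace conditions of Corollary~\ref{cor:general_darboux} into fewer conditions is itself unproven in this generality: the paper explicitly flags the mismatch between three real trace conditions and two real parameters as an open subtlety, and the claim that two traces suffice (with the third following automatically) is exactly what its numerics are offered as evidence for, not something you may assume. Any rigorous attack on Conjecture~\ref{conj:conj2} must either prove that reduction or work directly with the full system, and must replace the Rouch\'e step with a counting argument adapted to real systems.
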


We highlight one subtlety with the two conjectures we have raised: Corollaries~\ref{cor:general_darboux} and~\ref{cor:general_heun} appear to give three real algebraic constraints on our problem---corresponding to each of the traces $\tr(PQ)$, $\tr(QR)$, and $\tr(PR)$ being real---but the accessory parameter $B$ is only a single complex parameter, with two real degrees of freedom. In the case of the Lam\'e equation, this mismatch is handled naturally by the result of Beukers (Proposition~\ref{prop:beukers}); if any two of the pairwise traces are real, the third follows. Unitarity thus corresponds to two real algebraic constraints on two real parameters, and one expects a discrete set of solutions (as predicted in \cite[Conj.~1]{beukers2007unitary}).

We offer two pieces of evidence that a similar situation should hold in the general case, i.e., that Proposition~\ref{prop:general} (and thus Corollaries~\ref{cor:general_darboux} and~\ref{cor:general_heun}) can be strengthened to require only two pairwise traces to be real. First, we note that the argument underlying Conjecture~\ref{prop:asymptoticlattice} relies on the equivalence between unitary monodromy and the existence of real-analytic solutions (Proposition~\ref{prop:prop3}), rather than on our trace conditions above. In turn, the WKB approach used to approximate these real-analytic solutions gives rise to two real algebraic conditions at each order of approximation, just as in the Lam\'e case studied by Beukers.

Secondly, as we show shortly, numerical approximation offers direct evidence for two facts:
\begin{enumerate}
    \item The asymptotic values predicted in Conjecture~\ref{conj:conj2} correspond to unitary monodromy groups, despite the apparent mismatch of algebraic constraints discussed above.
    \item If one ensures that the traces $\tr(P_0Q_0)$ and $\tr(Q_0R_0)$ are real in the setting of Corollary~\ref{cor:general_heun}, then one consistently finds that the trace $\tr(P_0R_0)$ is real within a small tolerance.
\end{enumerate}


To investigate these claims, we leverage our analytical results to numerically approximate eigenvalues of the Heun equation. In short, Corollary~\ref{cor:general_heun} (and thus Theorem~\ref{thm:verygeneral} itself) transforms the accessory parameter problem~\eqref{qs:app2} into the following rootfinding problem:
\begin{equation}\label{eq:rootfinding}
\Im(\tr(P_0Q_0))=\Im(\tr(P_0R_0))=\Im(\tr(R_0Q_0))=0,
\end{equation}
where $P_0$, $Q_0$, and $R_0$ are the rescaled local monodromy matrices defined in Corollary~\ref{cor:general_heun}. 

A naive approach to this problem is to try to minimize the sum of squares of these three expressions. However, the resulting energy landscape does not lend itself well to traditional minimization schemes; it carries a multitude of nonzero local minima, for one, along with fine-scale structure in the vicinity of each true eigenvalue. While preconditioning may recover such an approach, it is outside the present scope.


\begin{figure}
    \centering
    \includegraphics[width=0.8\linewidth]{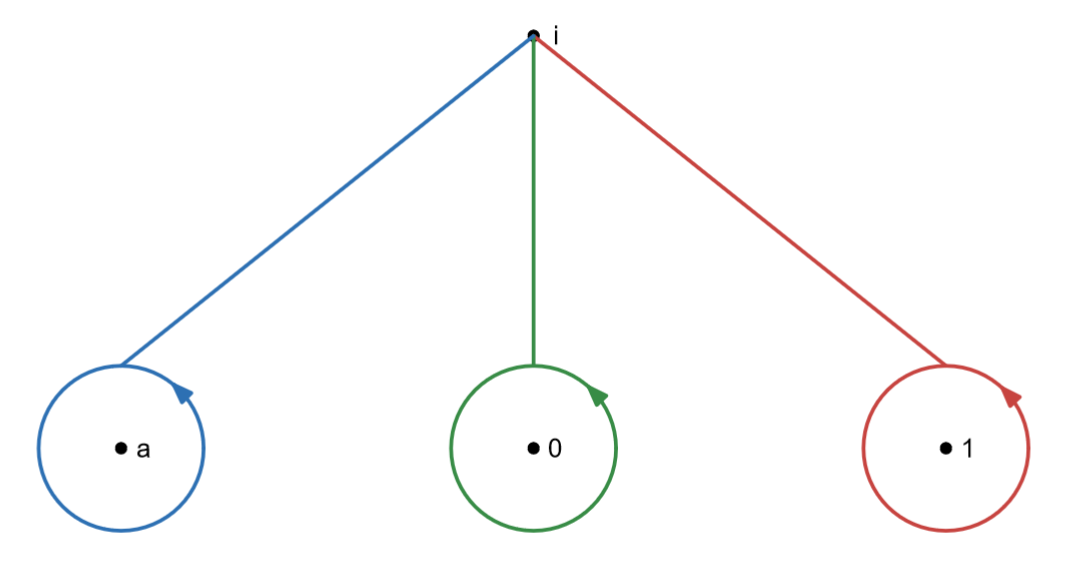}
    \caption{The three contours about which we integrate the Heun equation~\eqref{eq:Heun} in order to recover its monodromy matrices. We begin at the common point $i=\sqrt{-1}$, travel to the top-most point of a radius $1/5$ circle about each pole (fixing $a=-1$), and travel counter-clockwise around the circle before returning to our starting point.}
    \label{fig:contour}
\end{figure}

Instead, we use a Newton-like approach to find simultaneous roots of two of these three expressions, following the proposal of Beukers for the Lamé equation~\cite{beukers2007unitary}. First, we fix the third puncture of the Heun equation~\eqref{eq:Heun} at $a=-1$, and we choose a base point (here, we fix $z_0=i$) from which to integrate the equation. We numerically integrate the equation twice about each pole, following the contours drawn in Figure~\ref{fig:contour}; we first integrate with the initial conditions $y(x_0)=1$, $\frac{dy}{dx}(x_0)=0$, and then with the initial conditions $y(x_0)=0$, $\frac{dy}{dx}(x_0)=1$. We use a fourth-order Runge-Kutta method~\cite{butcher2008numerical} for the integration, with step size $|dx|=4\times 10^{-4}$. Sampling each solution at the end of the integration contour allows us to construct the monodromy matrices $P$, $Q$, and $R$, and rescaling by their (square-root) determinants yields the matrices $P_0$, $Q_0$, and $R_0$.

With these matrices in hand, we define the traces $t_{PQ}(B_n)=\tr(P_0Q_0)$ and $t_{QR}(B_n)=\tr(Q_0R_0)$ and, given a small $h\in\CC$, their approximate (holomorphic) derivatives
\[\nu = \frac{t_{PQ}(B_n + h)-t_{PQ}(B_n)}{h},\qquad \mu = \frac{t_{QR}(B_n + h)-t_{QR}(B_n)}{h}.\] 
In our own simulations, we fix $h=10^{-5}\in\RR$. To calculate our next estimate $B_{n+1} = B_n + \epsilon$, we want to ensure that $t_{PQ}(B_n + \epsilon)$ and $t_{QR}(B_n + \epsilon)$ are real. To first order, this criterion becomes
    \[\Im\left( t_{PQ}(B_n) + \nu \epsilon\right) = 0, \quad \Im \left( t_{QR}(B_n) + \mu \epsilon\right) = 0.\]
    Simultaneously solving both gives $\epsilon$ as
    \[\epsilon = \frac{ \overline{\mu} \Im(t_{PQ}(B_n)) - \overline{\nu} \Im(t_{QR}(B_n))}{\Im(\overline{\nu} \mu)}.\]
    Finally, we iterate $B$ as $B_{n+1} = B_n + \epsilon$ and repeat until convergence. If the third trace $t_{PR}(B)=\tr(P_0R_0)$ is real within a given tolerance (here, $3\%$ relative error), we accept the converged value.

\begin{figure}
    \centering
    \begin{subfigure}[m]{.63\textwidth}
        \centering
        \includegraphics[scale=0.47]{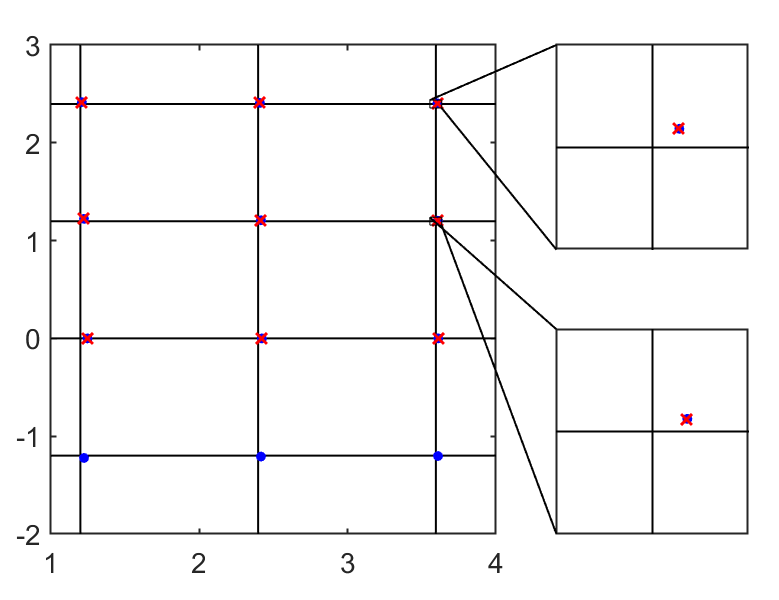}
        \caption{}\label{fig:LameFigure:a}
    \end{subfigure}%
    \begin{subfigure}[m]{.37\textwidth}
        \centering
        \includegraphics[scale=0.16]{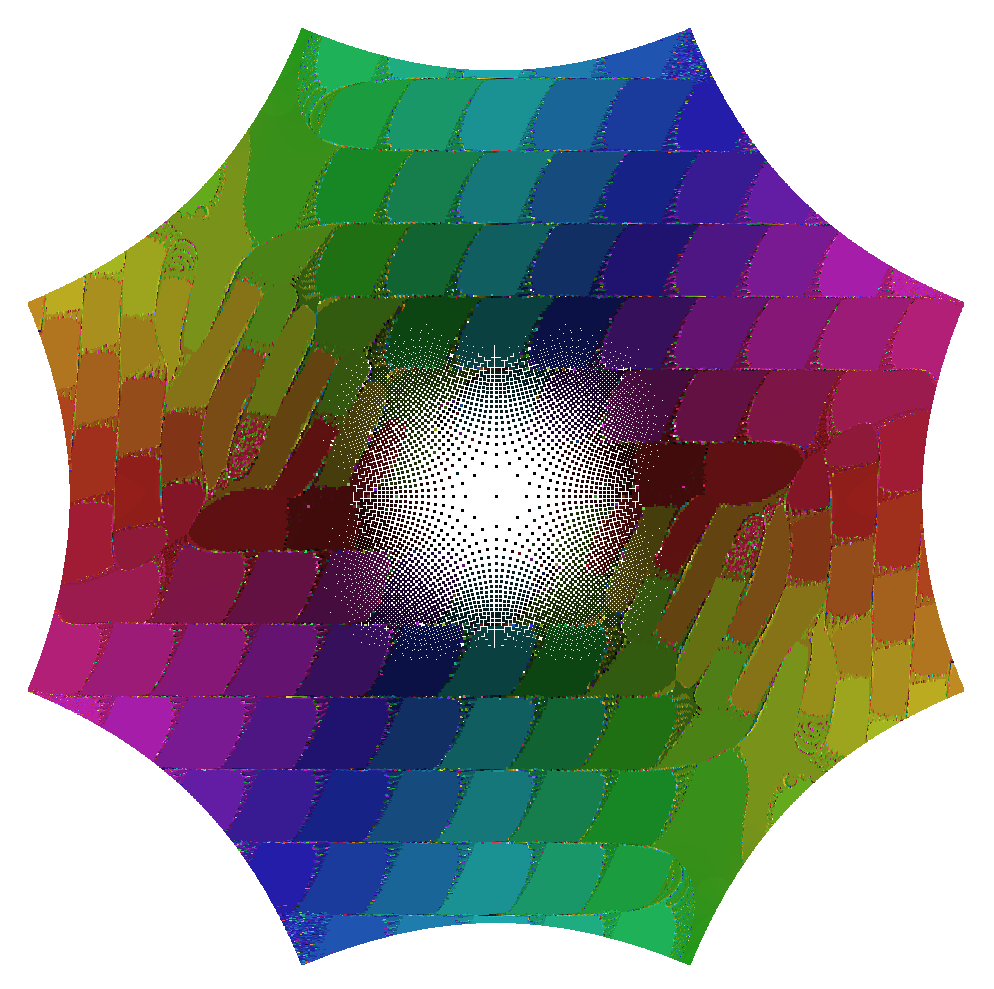}
        \caption{}\label{fig:LameFigure:b}
    \end{subfigure}
    \caption{\textbf{(a)} Approximate square-roots of the eigenvalues of the Lam\'e equation~\eqref{eq:lameAlgebraic}, found by starting our algorithm at each point $z_\text{init.}$ in the set~\eqref{eq:startingpoints} and iterating until convergence. All values satisfy~\eqref{eq:rootfinding} within a relative error of $1\%$. We show the lattice $\frac{2\pi}{\Delta}\ovl{\Lambda}\approx 1.198\,\ZZ^2$ in black and several asymptotic values (provided by Conjectures~\ref{conj:beukers} and~\ref{prop:asymptoticlattice}) in red; these are exactly the values considered by Beukers~\cite{beukers2007unitary}. In accordance with his conjecture, these eigenvalues appear to map one-to-one with lattice points; we zoom into a square of width $0.04$ about two lattice points in order to demonstrate this close agreement.  \textbf{(b)} A ``convergence map'' for the Lam\'e equation, showing the convergence properties of our algorithm in the (square root of the) region $[-50,50] + [-50,50]i\subset\CC$. The position of each pixel represents the initial value of $B$ in our algorithm, and its color (using a standard HSV color transform) represents the final value after 20 iterations. We see that our algorithm breaks down along certain complex arguments as we move further away from the origin.}
    \label{fig:LameFigure}
\end{figure}

\begin{figure}
    \centering
    \begin{subfigure}[m]{.33\textwidth}
        \centering
        \includegraphics[scale=0.35]{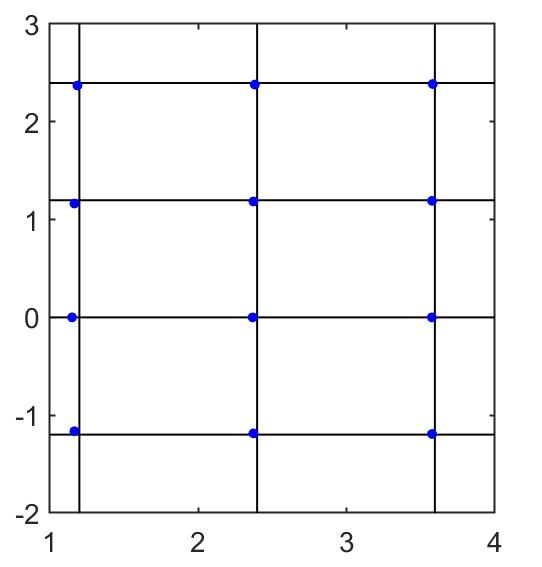}
        \caption{$\gamma=\eps=\delta=1/3$}\label{fig:HeunFigure:a}
    \end{subfigure}%
    \begin{subfigure}[m]{.33\textwidth}
        \centering
        \includegraphics[scale=0.35]{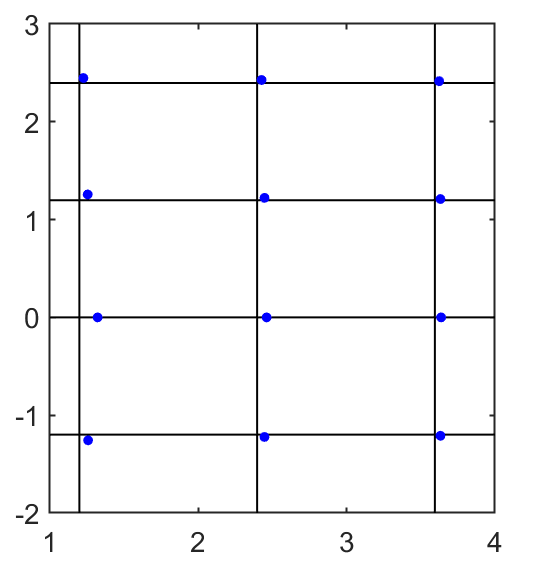}
        \caption{$\gamma=\eps=\delta=2/3$}\label{fig:HeunFigure:a}
    \end{subfigure}%
    \begin{subfigure}[m]{.33\textwidth}
        \centering
        \includegraphics[scale=0.35]{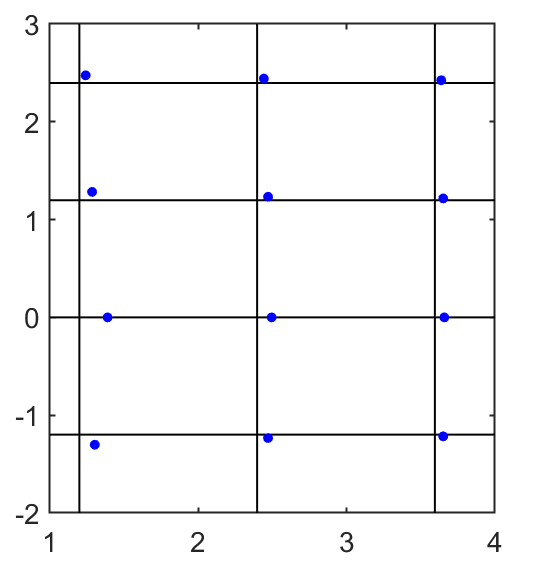}
        \caption{$\gamma=\eps=\delta=1$}\label{fig:HeunFigure:c}
    \end{subfigure}
    \caption{Approximate square-roots of the eigenvalues of three Heun equations, all with $\alpha=\beta$. As parameters move away from the Lam\'e values ($\gamma=\delta=\eps=1/2$), eigenvalues drift away from the lattice $\frac{2\pi}{\Delta}\Lambda$ in different directions.}
    \label{fig:HeunFigure}
\end{figure}
We apply our algorithm first to the Lam\'e equation (i.e., $\gamma=\delta=\eps=1/2$), where our algorithm reduces to that of Beukers~\cite{beukers2007unitary}. To exhaustively find all eigenvalues in a large region, we start our algorithm independently at each point
\begin{equation}\label{eq:startingpoints}
    \sqrt{z_\text{init.}}/1.198 \in \{1,2,3\} + \{-i,0,i,2i\}\subset\CC
\end{equation}
and run it until convergence; the factor of $1.198$ was chosen to align with the leading order prediction $\frac{2\pi}{\Delta}\ovl{\Lambda}\approx 1.198\,\ZZ^2$ of Proposition~\ref{prop:asymptoticlattice}. The square roots of our converged eigenvalues are shown Figure~\ref{fig:LameFigure:a} in blue, along with the lattice $\frac{2\pi}{\Delta}\ovl{\Lambda}$ in black and several asymptotic solutions in red, provided by Proposition~\ref{prop:asymptoticlattice}; the latter are exactly those of Beukers. We zoom into two example lattice points to demonstrate how the Lam\'e spectrum holds closely to the predicted, asymptotic values, consistent with Conjectures~\ref{conj:beukers},\ref{prop:asymptoticlattice}, and \ref{conj:conj2}.

Figure~\ref{fig:LameFigure:b} shows a ``convergence map'' of the same experiment, to clarify the global convergence properties of our algorithm. The position of each pixel represents an initial estimate of $\sqrt{B}$, and its color/brightness represents the converged value according to a standard HSV domain coloring of the complex plane. The center of this figure is the origin, and we show both positive and negative square roots of $B$ for visual clarity. In accordance with Figure~\ref{fig:LameFigure:a}, the plane is largely broken up into large cells that converge to the same point, and these cells approximately form a regular lattice. We can see that the algorithm itself breaks down for certain complex arguments as we move far from the origin; the boundaries between different basins of convergence grow noisier and more blurred in these regions. 

Finally, Figure~\ref{fig:HeunFigure} shows results for Heun equations with $\alpha=\beta$ and with $\gamma,\delta,\eps$ all at $1/3$, $2/3$, and $1$, respectively. Most eigenvalues appear to move only slowly away from their Lam\'e counterparts as parameters are changed. In particular, consistent with Conjecture~\ref{conj:conj2}, we appear to maintain a one-to-one correspondence between eigenvalues and the lattice $\frac{2\pi}{\Delta}\ovl{\Lambda}$. We emphasize that only the traces $t_{PQ}$ and $t_{QR}$ were used in identifying these values, and yet the third trace $t_{PR}$ always appears real within a $3\%$ relative error. As discussed above, these results provide strong evidence that Proposition~\ref{prop:general} and Corollaries~\ref{cor:general_darboux} and~\ref{cor:general_heun} can be strengthened to require only two pairwise traces to be real, as Beukers identified in the case of the Lam\'e equation (Proposition~\ref{prop:beukers}).


\begin{acknowledgments}
We would like to recognize the role of the MIT PRIMES-USA program for connecting the three authors and facilitating this research. In particular, we would like to recognize Pavel Etingof (MIT) for his insights and helpful discussions. The first author would like to acknowledge the support of an NDSEG Graduate Fellowship.
\end{acknowledgments}

\section*{Conflict of Interest Statement}
The authors have no conflicts to disclose.

\section*{Data Availability Statement}
We have published Python implementations of our algorithms on a GitHub repository, as well as instructions for their use: \url{https://github.com/ericc2023/HeunSimulation}.

\bibliography{sample}

\end{document}